\newcommand{\Q}{\mathbb{Q}}
\newcommand{\N}{\mathbb{N}}
\newtheorem{theorem}{Theorem}[section]
\newtheorem{lemma}[theorem]{Lemma}
\theoremstyle{definition}
\theoremstyle{theorem}
\newtheorem{corollary}[theorem]{Corollary}
\theoremstyle{theorem}
\newtheorem{proposition}[theorem]{Proposition}
\theoremstyle{theorem}
\theoremstyle{theorem}
\theoremstyle{definition}
\theoremstyle{theorem}
\numberwithin{equation}{section}
\begin{document}
\title{Asymptotic density and the Ershov hierarchy}

\author{Rod Downey}
\address{School of Mathematics, Statistics, and Operations Research\\
Victoria University of Wellington\\
New Zealand}
\email{Rod.Downey@msor.vuw.ac.nz}

\author{Carl Jockusch}
\address{Department of Mathematics\\
University of Illinois at Urbana-Champaign\\
1409 W. Green St.\\
Urbana, Illinois 61801 USA}
\email{jockusch@math.uiuc.edu}

\author{Timothy H. McNicholl}
\address{Department of Mathematics\\
Iowa State University\\
Ames, Iowa 5011}
\email{mcnichol@iastate.edu}

\author{Paul Schupp}
\address{Department of Mathematics\\
University of Illinois at Urbana-Champaign\\
1409 W. Green St.\\
Urbana, Illinois 61801 USA}
\email{schupp@math.uiuc.edu}

\begin{abstract}
  We classify the asymptotic densities of the $\Delta^0_2$ sets
  according to their level in the Ershov hierarchy.  In particular, it
  is shown that for $n \geq 2$, a real $r \in [0,1]$ is the density of
  an $n$-c.e.\ set if and only if it is a difference of left-$\Pi_2^0$
  reals.  Further, we show that the densities of the $\omega$-c.e.\ sets
  coincide with the densities of the $\Delta^0_2$ sets, and there are
  $\omega$-c.e.\ sets whose density is not the density of an  $n$-c.e. set
  for any $n \in \omega$.
\end{abstract}

\keywords{Computability theory, asymptotic density, $n$-c.e.\ sets,
  complexity of real numbers} \subjclass[2010]{03D25,03D78}

\maketitle

\section{Introduction}\label{sec:INTRODUCTION}

In computability theory, the complexity of sets $A \subseteq \omega$
is often measured using Turing reducibility and  the arithmetic
hierarchy.  In number theory, the size of a set $A \subseteq \omega$
is often measured using its asymptotic density $\rho(A) \in [0,1]$,
if this density exists.  It is natural to inquire about
relationships  between these measurements.  In \cite{Downey.Jockusch.Schupp.ta} 
it is shown that there is a very tight connection between the
position of a set $A$ in the arithmetic hierarchy and the complexity
of its density $\rho(A)$ as a real number, provided that $A$ has a
density. (These results are summarized in Theorem \ref{thm:DJS} below.)
Here we measure the complexity of a real $x_0$ in terms of
the complexity of its left Dedekind cut; that is, the set of all
rational numbers smaller than $x_0$.  In the current paper we study
the corresponding relationship  when we classify $A$ according to the
Ershov hierarchy, that is,  the number of changes in a computable
approximation to $A$.

We identify sets with their characteristic functions. According to the Shoenfield Limit Lemma, 
the $\Delta^0_2$ sets $A$ are exactly
those for which there is a computable function $g$ such that, for all
$x$, $A(x) = \lim_s g(x,s)$.  Roughly speaking, the Ershov hierarchy classifies $\Delta^0_2$
sets by  the number of $s$ with $g(x,s) \neq
g(x,s+1)$. In particular, if $f$ is a function and $A \subseteq \omega$, then
$A$ is called $f$-\emph{c.e.} if there is a computable function $g$ such
that, for all $x$, $A(x) = \lim_s g(x,s)$, $g(x,0) = 0$, and
 $|\{s:g(x,s) \neq g(x,s+1)\}| \leq f(x)$.  

Our goal here is to determine the relationship between the growth rate
of $f$ and the complexity of the asymptotic density of $A$ as a real
number, if it exists.  We show that every real number which is the
density of a $\Delta^0_2$ set is  the density of an id-$c.e.$
set, where id is the identity function.  In fact, we show that the
identity function could be replaced here by any computable,
non-decreasing, unbounded function $f$.  Thus, for any such $f$  the
densities of the $f$-c.e.\ sets coincide with the densities of the
$\Delta^0_2$ sets.  Since we consider only $f$ which are computable
and nondecreasing, it remains only to consider the densities of the
$f$-c.e.\ sets in the special case where $f$ is constant.  A set $A$ is called
$n$-\emph{c.e.}\ if $A$ is $c_n$-c.e, where $c_n$ is the constant
function with value $n$ on all arguments.   Thus, for example, the $1$-c.e.\ sets are precisely the c.e.\ sets
and the $2$-c.e.\ sets are precisely the \emph{d.c.e.}\ sets; i.e.\ those sets
that are differences of two c.e.\ sets.

It is shown in Theorem 5.13 of \cite{Downey.Jockusch.Schupp.ta} that
the densities of the c.e.\ sets are precisely the left-$\Pi^0_2$ reals
in the interval $[0,1]$.  Thus  one might  expect that the
densities of the d.c.e.\ sets are precisely the differences of
left-$\Pi^0_2$ reals in $[0,1]$.  We prove that this is the case,
but  care is necessary because $A \setminus B$ can have a
density even though  $A$ and  $B$ do not have densities.  The essential
observation here is that if $B \subseteq A$ and $A \setminus B$ has a
density, then this density is $\overline{\rho}(A) -
\overline{\rho}(B)$  (where $\overline{\rho}(X)$ is the upper density
of the set $X$).  Note that a difference of left-$\Pi_n^0$ reals is also a difference of left-$\Sigma_n^0$ reals.  A difference of left-$\Sigma_1^0$ reals is also known as a \emph{d.c.e.} real.  Relativizing the proof of Corollary 4.6 of
\cite{Ambos-Spies.Weihrauch.Zheng.2000} shows that there is a real which is
a difference of left-$\Pi^0_2$ reals but which is neither left-$\Pi^0_2$ nor
left-$\Sigma^0_2$.  Combining this with our results and Theorem 5.13
of \cite{Downey.Jockusch.Schupp.ta} shows that there is a real which
is the density of a d.c.e.\ set but not the density of any c.e.\ or co-c.e.\ set.

We next  consider the densities of $n$-c.e.\ sets for arbitrary  $n \geq 2$.
It is well known that every $n$-c.e.\ set is a finite disjoint union of
d.c.e.\ sets.  Also the reals which are differences of left-$\Pi^0_2$
reals are easily seen to be closed under addition.  Indeed,  these
reals form a field, as may be seen by relativizing Theorem 3.7 of
\cite{Ambos-Spies.Weihrauch.Zheng.2000}.  Thus one might expect that
if a real $r$ is the density of an $n$-c.e.\ set, then $r$ is a difference
of left-$\Pi^0_2$ reals.   We prove this, but  care
is again necessary because a disjoint union of sets can have a density when
the sets themselves fail to have densities.    It follows that, for all
$n \geq 2$, the densities of the $n$-c.e.\ sets coincide with the densities
of the d.c.e.\ sets.

Say that a set $A$ is $\omega$-\emph{c.e.}\ if $A$ is $f$-c.e.\ for some computable function
$f$.   This hierarchy has been
extended to levels indexed by notations for arbitrary computable
ordinals (see \cite{Epstein.Haas.Kramer.1981}), but there are some
subtleties because for levels $\alpha \geq \omega^2$ the sets occurring
at level $\alpha$ depend on the choice of a notation for $\alpha$.
We show that if a $\Delta^0_2$ set has a density $r$ then $r$ is also the
density of an  $\omega$-\emph{c.e.} set.     Thus, if  $\alpha$
 is a notation for a computable
ordinal greater than or equal to $\omega$, the densities of the $\alpha$-c.e.\ sets coincide with the densities
of the $\omega$-c.e.\ sets  and these in turn coincide
with the densities of the $\Delta^0_2$ sets.

We summarize some  background and prior results needed in Section
\ref{sec:BACKGROUND}.  In Section \ref{sec:dce} we characterize the
densities of d.c.e.\ sets, and in Section \ref{sec:nce} we characterize the
densities of $n$-c.e.\ sets.   In Section \ref{sec:omegace} we show that
the densities of $\Delta^0_2$ sets coincide with the densities of
the $f$-c.e.\ sets for any computable, nondecreasing, unbounded function $f$,
    Finally in Section \ref{sec:UPPER.LOWER} we show that with respect to upper and
lower densities, the Ershov hierarchy collapses even further.

\section{Background}\label{sec:BACKGROUND}

We begin with the basic definitions related to asymptotic density.
Let $X$ be a set of natural numbers.  When $n \in \N$, let 
\[
X \upharpoonright n = \{j\ :\ j \in X\ \wedge\ j < n\}.
\]
For $n > 0$, define
\[
\rho_n(X) = \frac{|X \upharpoonright n|}{n}
\]
 The \emph{upper density of $X$}
is defined to be
\[
\overline{\rho}(X) = \limsup_n \rho_n(X) 
\]
The \emph{lower density of $X$} is defined to be 
\[
\underline{\rho}(X) = \liminf_n  \rho_n(X)
\]
If the upper and lower density of $X$ coincide, then this common value
$\rho(X) = \lim_n  \rho_n(X)$ is called the \emph{asymptotic density of $X$}.
If $\mathcal{C}$ is a complexity class (such as $\Pi_2^0$,
$\Delta_2^0$, \emph{etc.}), then a real number $r$ is \emph{left
  (right)}-$\mathcal{C}$ if and only if its left (right) Dedekind cut
belongs to $\mathcal{C}$.  So, for example, a real $r$ is left-$\Pi_2^0$ if and only if the set
\[
\{q \in \Q\ :\ q < r\}
\]
is $\Pi_2^0$.
            
 Theorem 2.21 of \cite{Jockusch.Schupp.2012} shows that the
densities of the computable sets are exactly the $\Delta^0_2$ reals in
the interval $[0,1]$.  It is shown in Theorem 5.13 of
\cite{Downey.Jockusch.Schupp.ta} that the densities of the c.e.\ sets
are exactly the left-$\Pi^0_2$ reals in $[0,1]$.  By relativizing and
dualizing these results, one easily obtains the following theorem.

\begin{theorem}\label{thm:DJS} (Downey, Jockusch, Schupp)
Let $r$ be a real number in the interval $[0,1]$.
\begin{enumerate}
         \item $r$ is the density of a $\Delta_n^0$ set if and only if $r$ is
  $\Delta_{n+1}^0$.
        \item $r$ is the density of a $\Sigma_n^0$ set if and only if $r$ is
  left-$\Pi_{n+1}^0$.
	\item $r$ is the density of a $\Pi_n^0$ set if and only if $r$ is left-$\Sigma_{n+1}^0$.
\end{enumerate}
\end{theorem}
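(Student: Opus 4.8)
The plan is to derive all three parts from the two cited base results by relativizing them to iterated jumps and then dualizing. The two inputs are: the densities of the computable sets are exactly the $\Delta^0_2$ reals in $[0,1]$ (Theorem 2.21 of \cite{Jockusch.Schupp.2012}), and the densities of the c.e.\ sets are exactly the left-$\Pi^0_2$ reals in $[0,1]$ (Theorem 5.13 of \cite{Downey.Jockusch.Schupp.ta}). I would first observe that the proofs of both results are effective and use no special feature of the empty oracle, so they relativize uniformly: for every oracle $X$, the densities of the $X$-computable sets are exactly the reals in $[0,1]$ that are $\Delta^0_2$ relative to $X$, and the densities of the $\Sigma^0_1(X)$ sets are exactly the reals in $[0,1]$ whose left Dedekind cut is $\Pi^0_2$ relative to $X$.

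For parts (1) and (2), fix $n \geq 1$ and apply these relativized statements with $X = \emptyset^{(n-1)}$ (so $X = \emptyset$ when $n = 1$, recovering the base cases verbatim). By Post's theorem, the $X$-computable sets are precisely the $\Delta^0_n$ sets and the $\Sigma^0_1(X)$ sets are precisely the $\Sigma^0_n$ sets; likewise a real in $[0,1]$ is $\Delta^0_2$ relative to $X$ iff it is $\Delta^0_{n+1}$, and its left cut is $\Pi^0_2$ relative to $X$ iff it is left-$\Pi^0_{n+1}$. Substituting into the relativized statements yields parts (1) and (2) directly.

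Part (3) then follows from part (2) by passing to complements. For $A \subseteq \omega$, the set $A$ is $\Pi^0_n$ iff $\omega \setminus A$ is $\Sigma^0_n$, and since $\rho_n(\omega \setminus A) = 1 - \rho_n(A)$ for every $n$, the density $\rho(A)$ exists iff $\rho(\omega \setminus A)$ exists, in which case the two sum to $1$. On the side of reals, the computable involution $q \mapsto 1 - q$ of $\Q$ shows that a real $r \in [0,1]$ is left-$\Sigma^0_{n+1}$ iff $1 - r$ is left-$\Pi^0_{n+1}$: the complement in $\Q$ of the left cut of $1-r$ is, up to a single rational, the image of the left cut of $r$ under this map. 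Chaining the equivalences gives: $r$ is the density of a $\Pi^0_n$ set iff $1 - r$ is the density of a $\Sigma^0_n$ set iff, by part (2), $1 - r$ is left-$\Pi^0_{n+1}$ iff $r$ is left-$\Sigma^0_{n+1}$.

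The argument involves no serious obstacle; the only point requiring care is the bookkeeping in the last paragraph, namely keeping track of which Dedekind cut (left or right) and which class ($\Sigma$ or $\Pi$) is produced when one replaces $r$ by $1-r$ and $A$ by its complement, together with the standard observation that altering a Dedekind cut by one rational does not change its position in the arithmetic hierarchy.
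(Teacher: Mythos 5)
Your proposal is correct and is essentially the paper's own argument: the paper proves this theorem simply by noting that Theorem 2.21 of \cite{Jockusch.Schupp.2012} and Theorem 5.13 of \cite{Downey.Jockusch.Schupp.ta} relativize (to $\emptyset^{(n-1)}$, via Post's theorem) and dualize, which is exactly the relativization and the $A \mapsto \omega \setminus A$, $r \mapsto 1-r$ dualization you carry out. You have merely written out the routine details that the paper leaves to the reader.
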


Soare  \cite{Soare.1969}  gives many examples of real numbers which
are left-$\Sigma^0_1$ but which are not computable and hence not
left-$\Pi^0_1$. Another example of such a real is given in
\cite{Downey.Hirschfeldt.2010}, Corollary 5.1.9.  It follows by
relativization that for each $n \geq 1$ there is a real which is
left-$\Sigma^0_n$ but not left-$\Pi^0_n$.  Since a real $r$ is
left-$\Sigma^0_n$ if and only if $-r$ is left-$\Pi^0_n$, it follows
that for $n \geq 1$ the left-$\Sigma^0_n$ reals are not closed under
subtraction.
To obtain closure under subtraction, we instead consider reals of the
form $r - s$ where the reals $r$ and $s$ are left-$\Pi^0_n$ reals.
Let $\mathcal{D}_n$ be the set of such reals.  Study of the class
$\mathcal{D}_1$ was initiated in
\cite{Ambos-Spies.Weihrauch.Zheng.2000}, where elements of
$\mathcal{D}_1$ are called \emph{weakly computable reals}. That paper
 shows that $\mathcal{D}_1$ is actually a field.
It was further shown by Ng and  independently by Raichev that
$\mathcal{D}_1$ is a real-closed field.  (Proofs of these statements  are  also 
given in Chapter 5  of \cite{Downey.Hirschfeldt.2010} which is
a comprehensive source of information on the subject.)  The cited 
results extend by relativization to  $\mathcal{D}_n$ for all $n \geq 1$. 

By the remarks above, there are reals in $\mathcal{D}_n$ which are
neither left-$\Sigma^0_n$ nor left-$\Pi^0_n$.  Also, Ambos-Spies,
Weihrauch, and Zheng (\cite{Ambos-Spies.Weihrauch.Zheng.2000},
Corollary 4.10) showed that there is a $\Delta^0_2$ real which is not
in $\mathcal{D}_1$.  It again follows by relativization that for
each $n \geq 1$ there is a $\Delta^0_{n+1}$-real which is not in
$\mathcal{D}_n$.

Let $\mathcal{D}_n^A$ be the class of reals which are differences of
left-$\Pi^{0,A}_n$ reals, so $\mathcal{D}_n^A$ is simply the
relativization of $\mathcal{D}_n$ to $A$. Such 
relativized classes play a useful role in algorithmic randomness.
Call a set $A$ \emph{low for} $\mathcal{D}_n$ if $\mathcal{D}_n^A =
\mathcal{D}_n$.  It was shown by J. Miller (see Theorem 15.9.2 of
\cite{Downey.Hirschfeldt.2010}) that the $K$-trivial sets in the
sense of algorithmic randomness  are precisely the sets $A$ which are
low for $\mathcal{D}_1$.

\section{Densities of d.c.e.\ sets} \label{sec:dce}

It is shown in  Theorem 5.13 of \cite{Downey.Jockusch.Schupp.ta} that the
densities of the c.e.\ sets are the left-$\Pi^0_2$ reals in $[0,1]$.  Hence if
$A$, $B$ are c.e.\  sets having densities and $B \subseteq A$, then
$\rho(A \setminus B) = \rho(A) - \rho(B)$ is a difference of
left-$\Pi^0_2$ reals.  This suggests that if $r$ is the density of a
d.c.e.\ set, then  $r$ should also be  a difference of
left-$\Pi^0_2$ reals, that is, $r \in \mathcal{D}_2$.  However, 
 $A \setminus B$ can have a density even when $B \subseteq A$, and the
sets $A$ and $B$ do not have densities.  Nonetheless,
we will prove in this section that if a d.c.e.\ set has density
$r$, then $r \in \mathcal{D}_2$.  Conversely, we show that every real
in $\mathcal{D}_2 \cap [0,1]$ is the density of a d.c.e.\ set, thus
characterizing the densities of the d.c.e.\ sets as the reals in
$\mathcal{D}_2 \cap [0,1]$.  This  implies that there is a real
which is the density of a d.c.e.\ set but not of any c.e.\ or
co-c.e.\ set.

 The following proposition shows that we can use upper densities
to avoid the above mentioned difficulty  of nonexistent densities. 
 
\begin{proposition}\label{prop:limsup}
If $M \geq a_n \geq b_n \geq L$ for all $n$, and if $\lim_{n \rightarrow \infty} (a_n - b_n)$ exists, then 
$\lim_{n \rightarrow \infty} (a_n - b_n) = \limsup_n a_n - \limsup_n b_n$.
\end{proposition}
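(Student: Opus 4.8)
The plan is to let $c = \lim_{n\to\infty}(a_n - b_n)$ and establish the two inequalities $\limsup_n a_n - \limsup_n b_n \le c$ and $\limsup_n a_n - \limsup_n b_n \ge c$ separately, each coming from a suitable choice of subsequence. The boundedness hypothesis $M \ge a_n \ge b_n \ge L$ guarantees that all the $\limsup$'s in question are finite, so no issues with $\pm\infty$ arise, and that $c \ge 0$.

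For the first inequality, I would pick a subsequence $(n_k)$ along which $a_{n_k} \to \limsup_n a_n$. Passing to a further subsequence (possible since $(b_{n_k})$ is bounded), I may assume $b_{n_k}$ converges to some limit $\beta \le \limsup_n b_n$. Then along this sub-subsequence, $a_{n_k} - b_{n_k} \to \limsup_n a_n - \beta$, and since the full sequence $a_n - b_n$ converges to $c$, this forces $\limsup_n a_n - \beta = c$, hence $\limsup_n a_n - \limsup_n b_n \le \limsup_n a_n - \beta = c$. For the reverse inequality, I would symmetrically pick a subsequence along which $b_n \to \limsup_n b_n$, pass to a further subsequence along which $a_n$ converges to some $\alpha \le \limsup_n a_n$, and again use convergence of $a_n - b_n$ to get $\alpha - \limsup_n b_n = c$, whence $\limsup_n a_n - \limsup_n b_n \ge \alpha - \limsup_n b_n = c$.

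Combining the two inequalities gives $\limsup_n a_n - \limsup_n b_n = c = \lim_{n\to\infty}(a_n - b_n)$, as desired.

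There is no serious obstacle here; the only point requiring a little care is the double extraction of subsequences and the observation that convergence of the full difference sequence pins down the limit of the difference along \emph{any} subsequence. One could alternatively phrase the whole argument via the elementary inequalities $\limsup_n(a_n - b_n) \le \limsup_n a_n - \liminf_n b_n$ and $\limsup_n a_n \le \limsup_n(a_n-b_n) + \limsup_n b_n$ together with the analogous $\liminf$ bounds, then invoke the hypothesis that $a_n - b_n$ actually converges so that its $\limsup$ and $\liminf$ both equal $c$; I expect the subsequence argument to be the cleanest to write out.
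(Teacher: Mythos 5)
Your proof is correct. The underlying idea is the same as the paper's---establish the two inequalities $\limsup_n a_n - \limsup_n b_n \le c$ and $\limsup_n a_n - \limsup_n b_n \ge c$ by exploiting that the limsup of one sequence is approached along a subsequence, along which convergence of the difference pins down the behavior of the other sequence---but the implementation differs. The paper argues directly with $\epsilon$'s, combining an ``eventually'' bound on one sequence with an ``infinitely often'' bound on the other to show $\limsup_n b_n = \limsup_n a_n - c$, and never extracts convergent subsequences; you instead invoke Bolzano--Weierstrass twice to pass to sub-subsequences on which both sequences converge and then compare limits. Consequently you use the boundedness hypothesis both for finiteness of the limsups and for compactness, while the paper needs it only for finiteness; both arguments are complete, yours being perhaps a bit cleaner to verify and the paper's using a slightly more elementary toolkit. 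Two minor remarks: the observation $c \ge 0$ comes from $a_n \ge b_n$ rather than from boundedness, and in fact neither that observation nor the hypothesis $a_n \ge b_n$ is needed for the identity---both your argument and the paper's go through for any two bounded sequences whose difference converges.
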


\begin{proof}

  Note that the result is clear if $a_n - b_n$ is constant, since then
  $\{a_n\}$ and $\{b_n\}$ are near their respective lim sups
  simultaneously, and so respective lim sups must differ by the same
  constant.  We show below that essentially this same argument works
  when we assume only that $a_n - b_n$ has a limit.

Let $a = \limsup_n a_n$ and $b = \limsup_n b_n$, where these are real numbers  because
the given sequences are bounded.   Let $d = \lim_n(a_n - b_n)$, which
exists by hypothesis.   We must show that $d = a - b$, which we prove in the
form $b = a - d$, i.e.  $\limsup_n b_n = a - d$.

Let $\epsilon > 0$ be given.   Since $\limsup_n a_n = a$, we have $a_n \leq a + \epsilon/2$
for all sufficiently large $n$.   Since $\lim_n (b_n - a_n) = -d$, we also have
$b_n - a_n \leq  -d + \epsilon/2$ for all sufficiently large $n$.   Adding these inequalities,
we have $b_n \leq a - d + \epsilon$ for all sufficiently large $n$.   Since $\epsilon$ was arbitrary,
we conclude that $b = \limsup_n b_n \leq a - d$.

To obtain the reverse inequality, again let $\epsilon > 0$ be given.
Since $\limsup_n a_n = a$, there are infinitely many $n$ such that
$a_n \geq a - \epsilon/2$.  Let $S$ be the set of such $n$.  Since
$\lim_n (b_n - a_n) = -d$, we have $b_n - a_n \geq -d - \epsilon/2$ for
all sufficiently large $n$.  Adding these inequalities, we have that
$b_n \geq a - d - \epsilon$ for all sufficiently large $n \in S$, and
hence for infinitely many $n$.  Since $\epsilon$ was arbitrary, we
conclude that $b = \limsup_n b_n \geq a - d$, and hence, by the previous 
paragraph, $b = a - d$.
\end{proof}

\begin{corollary}
  If $Y$ is a subset of $X$, and if $X - Y$ has a density, then its
  density is the upper density of $X$ minus the upper density of $Y$.
\end{corollary}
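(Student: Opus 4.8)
The corollary is an immediate application of Proposition~\ref{prop:limsup}. The plan is as follows.

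First I would set up the bounded sequences to which the proposition applies. Put $a_n = \rho_n(X)$ and $b_n = \rho_n(Y)$. Since $Y \subseteq X$, we have $Y \upharpoonright n \subseteq X \upharpoonright n$ for every $n$, hence $|Y \upharpoonright n| \le |X \upharpoonright n|$ and therefore $b_n \le a_n$ for all $n$. Moreover every $\rho_n$ lies in $[0,1]$, so $1 \ge a_n \ge b_n \ge 0$ for all $n > 0$; thus the hypothesis $M \ge a_n \ge b_n \ge L$ holds with $M = 1$, $L = 0$.

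Next I would identify $a_n - b_n$ with $\rho_n(X \setminus Y)$. Because $Y \subseteq X$, the set $X \setminus Y$ restricted to $n$ is exactly $(X \upharpoonright n) \setminus (Y \upharpoonright n)$, and since $Y \upharpoonright n \subseteq X \upharpoonright n$ we get $|(X \setminus Y) \upharpoonright n| = |X \upharpoonright n| - |Y \upharpoonright n|$. Dividing by $n$ gives $\rho_n(X \setminus Y) = a_n - b_n$. The hypothesis of the corollary says $X \setminus Y$ has a density, i.e.\ $\lim_n \rho_n(X \setminus Y) = \lim_n (a_n - b_n)$ exists. So Proposition~\ref{prop:limsup} applies and yields
\[
\rho(X \setminus Y) = \lim_n (a_n - b_n) = \limsup_n a_n - \limsup_n b_n = \overline{\rho}(X) - \overline{\rho}(Y),
\]
which is precisely the claimed conclusion.

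There is no real obstacle here; the only point requiring a word of care is the bookkeeping identity $|(X \setminus Y) \upharpoonright n| = |X \upharpoonright n| - |Y \upharpoonright n|$, which genuinely uses $Y \subseteq X$ (without containment one would only have $\rho_n(X\setminus Y) \ge \rho_n(X) - \rho_n(Y)$, and the clean subtraction would fail). Once that is in place, the result is a direct substitution into the proposition.
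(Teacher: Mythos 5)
Your proof is correct and is exactly the intended application of Proposition~\ref{prop:limsup} (the paper states the corollary without proof, as an immediate consequence): take $a_n=\rho_n(X)$, $b_n=\rho_n(Y)$, note $0\le b_n\le a_n\le 1$ and $\rho_n(X\setminus Y)=a_n-b_n$ using $Y\subseteq X$, and apply the proposition. Nothing is missing.
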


\begin{corollary} \label{cor:dceD2} If $C$ is a d.c.e.\ set which has a density, then
  $\rho(C) \in \mathcal{D}_2$.
\end{corollary}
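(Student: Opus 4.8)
The plan is to deduce Corollary~\ref{cor:dceD2} directly from the preceding corollary about upper densities. Suppose $C$ is a d.c.e.\ set with a density; by definition $C = A \setminus B$ for c.e.\ sets $A$ and $B$, and by replacing $B$ with $A \cap B$ (which is still c.e.) we may assume $B \subseteq A$. Then $C = A \setminus B$ with $Y := B$ a subset of $X := A$, so the previous corollary applies and yields $\rho(C) = \overline{\rho}(A) - \overline{\rho}(B)$.

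It then remains to observe that the upper density of any c.e.\ set is a left-$\Pi^0_2$ real. This is essentially the content of the machinery behind Theorem~5.13 of \cite{Downey.Jockusch.Schupp.ta}: although that theorem as quoted concerns c.e.\ sets that \emph{have} a density, the construction there in fact shows (and it is easy to check directly) that $\overline{\rho}(W)$ is left-$\Pi^0_2$ for every c.e.\ set $W$, since $\overline{\rho}(W) = \limsup_n \rho_n(W)$ and for a c.e.\ set the values $\rho_n(W)$ can be approximated from below in a $\Sigma^0_1$ fashion with the $\limsup$ contributing one more quantifier alternation. Granting this, both $\overline{\rho}(A)$ and $\overline{\rho}(B)$ are left-$\Pi^0_2$, so $\rho(C) = \overline{\rho}(A) - \overline{\rho}(B)$ is a difference of left-$\Pi^0_2$ reals, i.e.\ $\rho(C) \in \mathcal{D}_2$, as required.

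The main point requiring care — and the only real obstacle — is the claim that $\overline{\rho}(W)$ is left-$\Pi^0_2$ for an arbitrary c.e.\ set $W$, not merely for one with a density. I expect the paper either proves this as a separate lemma or folds it into the discussion; in a self-contained write-up one would argue that $q < \overline{\rho}(W)$ holds iff for every $N$ there exists $n \geq N$ with $\rho_n(W) > q$, and that ``$\rho_n(W) > q$'' is $\Sigma^0_1$ uniformly in $n$ (enumerate enough elements of $W$ below $n$), giving the required $\forall\exists$ form after absorbing the inner $\Sigma^0_1$ matrix. Everything else — the reduction to the case $B \subseteq A$, the application of the upper-density corollary, and the closure of $\mathcal{D}_2$ under the relevant operations — is routine.
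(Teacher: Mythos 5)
Your proposal is correct and follows essentially the same route as the paper: the paper also writes $C = A \setminus B$ with $B \subseteq A$, applies the upper-density corollary to get $\rho(C) = \overline{\rho}(A) - \overline{\rho}(B)$, and then simply cites Theorem~5.6 of \cite{Downey.Jockusch.Schupp.ta}, which states exactly the fact you were worried about, namely that the upper density of any c.e.\ set is left-$\Pi^0_2$. Your sketched direct argument for that fact is fine except for one small point: the equivalence ``$q < \overline{\rho}(W)$ iff $\rho_n(W) > q$ for infinitely many $n$'' can fail when $q = \overline{\rho}(W)$ is rational, so one should (as the paper does in the analogous step of Theorem~\ref{thm:nce}) dispose of the case of a rational upper density separately, where the cut is trivially computable.
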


\begin{proof}
Let $C = A \setminus B$, where $A, B$ are c.e.\ and $B \subseteq A$.  Then
$\rho(C) = \overline{\rho}(A) - \overline{\rho}(B)$ by the previous corollary
and the reals $\overline{\rho}(A)$ and $\overline{\rho}(B)$ are each left-$\Pi^0_2$
by \cite{Downey.Jockusch.Schupp.ta}, Theorem 5.6.
\end{proof}   

The next  theorem will  allow us to prove the  converse: Every real
in $\mathcal{D}_2 \cap [0,1]$ is the density of a d.c.e.\ set.
In order to prove the theorem we need the following lemma which asserts a
well-known fact about conditional densities.

\begin{lemma}
Let $h$ be a strictly increasing function and let $X \subseteq \omega$.
Then $\rho(h(X)) = \rho(\mbox{range}(h))\rho(X)$ provided that both the
range of $h$ and $X$ have densities.
\end{lemma}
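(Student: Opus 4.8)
The plan is to analyze the relationship between the counting function of $h(X)$ up to a bound $n$ and the counting functions of $X$ and $\mathrm{range}(h)$, and then take the limit. Fix $n$ and let $m = m(n)$ be the number of elements of $\mathrm{range}(h)$ that are less than $n$, i.e.\ $m = |\mathrm{range}(h) \upharpoonright n|$. Since $h$ is strictly increasing, the elements of $h(X)$ below $n$ are exactly $h(x)$ for those $x \in X$ with $h(x) < n$; because $h$ enumerates its range in increasing order, $h(x) < n$ is equivalent to $x < m$ (the $x$-th value $h(x)$, counting from $0$, lies below $n$ iff fewer than $n$ values precede it, i.e.\ iff $x < m$). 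Hence $|h(X) \upharpoonright n| = |X \upharpoonright m|$. Dividing, I get the exact identity
\[
\rho_n(h(X)) = \frac{|X \upharpoonright m|}{n} = \frac{|X \upharpoonright m|}{m} \cdot \frac{m}{n} = \rho_m(X)\,\rho_n(\mathrm{range}(h)),
\]
valid for all $n$ large enough that $m = m(n) > 0$ (which happens eventually since $\mathrm{range}(h)$ is infinite).

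Now I take $n \to \infty$. The factor $\rho_n(\mathrm{range}(h))$ tends to $\rho(\mathrm{range}(h))$ by hypothesis. For the factor $\rho_m(X)$, the key point is that $m = m(n) \to \infty$ as $n \to \infty$ (again because $\mathrm{range}(h)$ is infinite, so it has unboundedly many elements below any threshold), and therefore $\rho_{m(n)}(X) \to \rho(X)$ as a subsequential-style limit along the values $m(n)$ — more precisely, since $\lim_{k} \rho_k(X) = \rho(X)$ exists, every sequence $m(n) \to \infty$ gives $\rho_{m(n)}(X) \to \rho(X)$. Multiplying the two convergent factors yields $\rho_n(h(X)) \to \rho(\mathrm{range}(h))\,\rho(X)$, which is exactly the claim; in particular the limit exists, so $h(X)$ has a density.

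The only genuinely delicate point is the combinatorial identity $|h(X)\upharpoonright n| = |X \upharpoonright m(n)|$, and I would make sure the indexing is right: if $h$ is indexed starting at $0$ with $h(0) < h(1) < \cdots$, then $h(j) < n$ iff $j < |\{k : h(k) < n\}| = |\mathrm{range}(h)\upharpoonright n| = m(n)$, and restricting to $j \in X$ gives the identity. Everything else is a routine limit argument, so I do not expect any real obstacle; the statement is, as the paper says, a well-known fact, and the proof is essentially this one-line counting identity followed by passing to the limit. One should note at the end that the hypothesis that $\mathrm{range}(h)$ has a density is what guarantees $m(n)/n$ converges and, together with $X$ having a density, lets the product of limits be taken.
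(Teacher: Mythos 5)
Your proof is correct and is essentially the paper's own argument: your $m(n)=|\mathrm{range}(h)\upharpoonright n|$ is exactly the paper's function $g(u)$ (the least $k$ with $h(k)\geq u$), and both proofs rest on the same counting identity $|h(X)\upharpoonright n|=|X\upharpoonright m(n)|$ followed by factoring $\rho_n(h(X))=\rho_{m(n)}(X)\,\rho_n(\mathrm{range}(h))$ and passing to the limit. No gaps; your care about $m(n)>0$ and $m(n)\to\infty$ matches the paper's observation that $g(u)$ tends to infinity.
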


\begin{proof}

Let $R$ be the range of $h$, and for each $u$, let $g(u)$ be the least
$k$ such that $h(k) \geq u$.   Note that, for all $u$,
$$|h(X) \upharpoonright u| = |X \upharpoonright g(u)|  \quad \& \quad
|R \upharpoonright u| = g(u)$$
via the bijections induced by $h$.   It follows that
$$\rho_{g(u)}(X) \cdot \rho_u(R) = \frac{|X \upharpoonright g(u)|}{g(u)} \ 
\frac{|R \upharpoonright u|}{u} = \frac{|h(X) \upharpoonright
  u|}{g(u)} \ \frac{g(u)}{u} = \frac{|h(X) \upharpoonright u|}{u} =
\rho_{u}(h(X))$$ for all $u$.  Hence, $\rho_{u}(h(X)) = \rho_{g(u)}(X)
\cdot \rho_u(R)$.  As $u$ tends to infinity, $g(u)$ also tends to
infinity, and the lemma follows.
\end{proof}

\begin{theorem}\label{thm:difference}
  If $a,b$ are left-$\Pi_2^0$ reals such that $0 \leq b \leq a \leq 1$, 
  then there is a c.e.\ set $A$ with density $a$ and a c.e.\ set $B \subseteq A$ with density $b$.
\end{theorem}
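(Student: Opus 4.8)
The plan is to build $A$ and $B$ by a single infinite-injury-free enumeration that controls both sets simultaneously, using the characterization of left-$\Pi_2^0$ reals in $[0,1]$ as densities of c.e.\ sets (Theorem 5.13 of \cite{Downey.Jockusch.Schupp.ta}) as a black box on the ``outer'' scale, and then thinning inside $A$ to carve out $B$. Concretely, since $a$ is left-$\Pi_2^0$, there is a c.e.\ set $A_0$ with $\rho(A_0)=a$; and since $b$ is left-$\Pi_2^0$, there is a c.e.\ set $B_0$ with $\rho(B_0)=b$. If $a>0$ then $b/a \le 1$ is a ratio of two left-$\Pi_2^0$ reals, which need not itself be left-$\Pi_2^0$, so I cannot simply pull a c.e.\ set of that density off the shelf; instead I will use $A$ itself as a new ``index set'' on which to run a density-$b/a$ construction. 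This is exactly what the preceding Lemma is for: if $h$ is the strictly increasing enumeration of $A$, and $X$ is chosen with $\rho(X) = b/a$ relative to the appropriate oracle, then $B = h(X)$ satisfies $\rho(B) = \rho(A)\cdot\rho(X) = a\cdot(b/a) = b$ and $B \subseteq A$. The case $a = 0$ is trivial (take $A = B = \emptyset$), and if $a=b$ take $B=A$, so assume $0 < b < a \le 1$.

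The difficulty with the naive version of the above is that $A$ is only c.e., so its enumeration function $h$ is not computable, and ``$X$ has density $b/a$'' has to be arranged effectively in $A$, not relative to $A$ as an oracle, if $B$ is to come out c.e. So the real construction interleaves the two processes. First I would fix a c.e.\ approximation $\{A_s\}$ to a set $A$ with $\rho(A)=a$ coming from the proof of Theorem 5.13, and note that that proof actually gives good finite-stage control: one knows approximately how many elements lie below $n$ for large $n$. Running alongside it, I maintain a ``target ratio'' $b/a$ and, each time a new element $x$ is enumerated into $A$, I decide — based on the current count of $A$-elements and $B$-elements below $x$ — whether also to put $x$ into $B$, choosing so as to drive $\rho_n(B)/\rho_n(A)$ toward $b/a$. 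Because elements only ever enter (both $A$ and $B$ are c.e.) the counts $|A\!\upharpoonright\! n|$ and $|B\!\upharpoonright\! n|$ are nondecreasing in the stage, so this is a genuine greedy enumeration with no retraction; $B \subseteq A$ holds by construction.

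The main obstacle is the convergence argument: I must show that the greedy choice actually forces $\lim_n \rho_n(B) = b$, and this is delicate precisely because $A$ itself may converge to density $a$ only slowly and non-monotonically in $n$ (the partial densities $\rho_n(A)$ oscillate around $a$). The key estimate will be that $|B\!\upharpoonright\! n| = \lfloor (b/a)\,|A\!\upharpoonright\! n|\rfloor + O(1)$ for all $n$ — i.e.\ the greedy rule keeps $B$ within a bounded additive error of the ideal count at \emph{every} $n$, not just in the limit — from which $\rho_n(B) = (b/a)\rho_n(A) + O(1/n) \to (b/a)\cdot a = b$. To get this uniform-error bound I need that whenever the running ratio drifts, there are enough future enumerations into $A$ to correct it, which follows from $a > 0$ (so $|A\!\upharpoonright\! n| \to \infty$) together with the fact that each new $A$-element moves the $B$-count by at most one, so a single bad step costs only $O(1)$ and is repaired at the next few $A$-elements. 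A symmetric remark controls over-shooting. Finally one checks $A$ is c.e.\ (it is literally the set from Theorem 5.13) and $B$ is c.e.\ (its enumeration is driven by a computable rule applied to the c.e.\ enumeration of $A$), completing the proof.
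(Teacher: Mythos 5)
There is a genuine gap, and it sits exactly at the point you flag and then wave away: the target ratio $b/a$. Your set $B$ is supposed to be c.e.\ because ``its enumeration is driven by a computable rule applied to the c.e.\ enumeration of $A$,'' but the rule you describe compares current counts against the threshold $b/a$, and $b/a$ is not a computable real --- as you yourself note, it need not even be left-$\Pi^0_2$. The hypotheses only give you $\Pi^0_2$-style (e.g.\ $\limsup$ of rationals) approximations to $a$ and $b$ separately, and the ratio of such approximations does not converge to $b/a$ in any way a stage-by-stage greedy rule can use; so the construction as described does not produce a c.e.\ set, and no amount of tightening the convergence analysis repairs that. A second, independent problem is the claimed invariant $|B\upharpoonright n| = \lfloor (b/a)|A\upharpoonright n|\rfloor + O(1)$ for \emph{every} $n$: membership in $B$ is decided at the moment an element enters $A$, but elements of a c.e.\ set can enter in an adversarial order. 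If, say, the elements of $A$ in a long interval are enumerated in decreasing order, then every element of that interval is decided while seeing the same counts (none of its smaller neighbors have appeared yet), so your local rule gives them all the same verdict and the ratio on that interval is $0$ or $1$, not $b/a$; the ``single bad step costs $O(1)$ and is repaired'' heuristic breaks down because the errors are not single steps. Your appeal to ``good finite-stage control'' inside the proof of Theorem 5.13 is also not something you can extract from using that theorem as a black box.

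The paper's proof avoids both obstacles with one device you are missing: a \emph{rational} intermediate value. Fix a rational $q$ with $b < q < a$ and a \emph{computable} set $C$ of density $q$ (Theorem 2.21 of \cite{Jockusch.Schupp.2012}). Since $C$ and $\overline{C}$ are computable, their strictly increasing enumerations $h$ are computable, so images of c.e.\ sets under them are c.e.; and because $q$ is rational, the needed relative densities $b/q$ and $(a-q)/(1-q)$ are still left-$\Pi^0_2$ reals in $[0,1]$, so Theorem 5.13 supplies c.e.\ sets $B_1$, $A_1$ of those densities directly. Then $B = h(B_1) \subseteq C$ and $A = C \cup h'(A_1)$ with $h'$ enumerating $\overline{C}$ have densities $b$ and $a$ by the conditional-density lemma, and $B \subseteq C \subseteq A$. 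In other words, the thinning is done inside a computable set of known rational density, rather than inside the c.e.\ set $A$ itself, which is precisely what makes the quotient computable-friendly and the resulting set c.e.
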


\begin{proof}
  It is shown in Theorem 5.13 of \cite{Downey.Jockusch.Schupp.ta} that
  every left-$\Pi^0_2$ real in the interval $[0,1]$ is the density of
  a c.e.\ set, which is the case $a = b$ of the current result.  Thus,
  we may assume that $b < a$.  Let $q$ be a rational number such
  that $b < q < a$, and let $C$ be a computable set of density $q$,
  which exists by Theorem 2.21 of \cite{Jockusch.Schupp.2012}.  We will
  obtain $A$ by expanding $C$ and obtain $B$ by shrinking $C$.  In more
  detail, we obtain $A$ as $C \cup A_0$, where $A_0 \subset
  \overline{C}$ is a c.e.\ set of density $a - q$.

Let $h$ be a computable, strictly increasing  function with range $\overline{C}$.  
Then let $A_0 = h(A_1)$, where $A_1$ is a c.e.\ set of density $(a- q)/(1 - q)$.  
Such a set exists by Theorem 5.13 of \cite{Downey.Jockusch.Schupp.ta} 
because $(a- q)/(1 - q)$ is a left-$\Pi^0_2$ real in $[0,1]$.  
Hence, 
$$\rho(A_0) = \rho(h(A_1)) = (1-q)\frac{a-q}{1-q} = a-q$$
 by the lemma, and thus
$$\rho(A) = \rho(C \sqcup A_0) = \rho(C) + \rho(A_0) = q + (a - q) = a$$
as desired.  The c.e.\ set $B \subseteq C$ of density $b$ is obtained analogously, but
working within $C$ instead of $\overline{C}$. Namely $B = h(B_1)$,
where $h$ is now a strictly increasing computable function with range
$C$ and $B_1$ is a c.e.\ set of density $b/q$.  
Since  $B \subseteq C \subseteq A$, the proof is complete.

\end{proof}

\begin{corollary}   The densities of the d.c.e.\ sets coincide with the reals
in $\mathcal{D}_2 \cap [0,1]$.
\end{corollary}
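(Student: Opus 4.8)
The forward inclusion is already in hand: by Corollary \ref{cor:dceD2}, any d.c.e.\ set which has a density has that density in $\mathcal{D}_2$, and of course every density lies in $[0,1]$. So the plan is to establish the reverse inclusion, that every $r \in \mathcal{D}_2 \cap [0,1]$ is the density of a d.c.e.\ set, with Theorem \ref{thm:difference} as the main tool.

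First I would dispose of the endpoints $r = 0$ and $r = 1$ directly: the empty set and $\omega$ are computable, hence d.c.e., with densities $0$ and $1$. So assume $0 < r < 1$ and fix left-$\Pi^0_2$ reals $a'$, $b'$ with $r = a' - b'$. The next point is that the left-$\Pi^0_2$ reals are closed under translation by a fixed rational $q$, since $\{p \in \Q : p < x + q\} = \{p \in \Q : p - q < x\}$, and applying the same translation to both $a'$ and $b'$ does not change their difference $r$. Since the interval $[-b',\, 1 - r - b']$ has length $1 - r > 0$, it contains a rational $q$; putting $b = b' + q$ and $a = a' + q = r + b$ yields left-$\Pi^0_2$ reals with $a - b = r$ and $0 \leq b \leq a \leq 1$ (here $b \leq a$ because $r \geq 0$, and $a = r + b \leq r + (1-r) = 1$).

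Now apply Theorem \ref{thm:difference} to $a$ and $b$ to obtain a c.e.\ set $A$ with $\rho(A) = a$ and a c.e.\ set $B \subseteq A$ with $\rho(B) = b$. Then $C := A \setminus B$ is a difference of c.e.\ sets, hence d.c.e., and since $B \subseteq A$ we have $|C \upharpoonright n| = |A \upharpoonright n| - |B \upharpoonright n|$ for every $n$, so $\rho_n(C) = \rho_n(A) - \rho_n(B) \to a - b = r$; thus $C$ has density $r$. (Equivalently one may invoke the first corollary to Proposition \ref{prop:limsup}.) This gives $r$ as the density of a d.c.e.\ set and completes the proof.

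The argument is essentially a bookkeeping assembly of Corollary \ref{cor:dceD2} and Theorem \ref{thm:difference}; the only point requiring any care is the rational translation used to bring the two left-$\Pi^0_2$ reals into $[0,1]$ with the correct ordering so that Theorem \ref{thm:difference} applies, together with the separate treatment of the endpoints $r \in \{0,1\}$, where that translation degenerates.
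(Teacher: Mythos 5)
Your proof is correct and follows essentially the same route as the paper: the forward direction is Corollary \ref{cor:dceD2}, and the reverse direction writes $r = a - b$ with $a,b$ left-$\Pi^0_2$ and $0 \leq b \leq a \leq 1$ and applies Theorem \ref{thm:difference} to take $A \setminus B$. The only difference is that you spell out the rational-translation normalization (and the endpoint cases) that the paper asserts without comment, which is a reasonable bit of added care.
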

\begin{proof}
 The density of a d.c.e.\ set is  in $\mathcal{D}_2 \cap [0,1]$  by Corollary \ref{cor:dceD2}. 
 For the other direction, consider a real $r \in [0,1]$ which is a
  difference of left-$\Pi^0_2$ reals.  Write $r$ as $a - b$, where $a,b$ are
  left-$\Pi^0_2$ reals and $1 \geq a \geq b \geq 0$.  By Theorem
  \ref{thm:difference}, there are c.e.\ sets $A, B$ such that $B
  \subseteq A$,  $\rho(A) = a$, and $\rho(B) = b$.
   Then $A \setminus B$ is d.c.e. and  $\rho(A  \setminus B) = a - b$.
\end{proof}

\begin{corollary}\label{cor:NO.COLLAPSE}
  There is a $2$-c.e.\ set which  has a density but whose density is not
  the density of any c.e.\ set or co-c.e.\ set.
\end{corollary}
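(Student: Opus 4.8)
The plan is to combine the characterization of the d.c.e.\ densities (the reals in $\mathcal{D}_2 \cap [0,1]$) with the known fact that $\mathcal{D}_2$ strictly contains the left-$\Pi^0_2$ reals and the left-$\Sigma^0_2$ reals. First I would invoke the result mentioned in the introduction, obtained by relativizing Corollary 4.6 of \cite{Ambos-Spies.Weihrauch.Zheng.2000}: there is a real $r \in \mathcal{D}_2$ (that is, $r$ is a difference of two left-$\Pi^0_2$ reals) such that $r$ is neither left-$\Pi^0_2$ nor left-$\Sigma^0_2$. Since $\mathcal{D}_2$ is closed under rational translation and the left-$\Pi^0_2$ and left-$\Sigma^0_2$ classes are preserved under such translation, we may assume $r \in (0,1)$, indeed $r \in \mathcal{D}_2 \cap [0,1]$.

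Next I would apply the preceding corollary (densities of d.c.e.\ sets $=$ $\mathcal{D}_2 \cap [0,1]$) to produce a $2$-c.e.\ (d.c.e.) set $C$ with $\rho(C) = r$. It then remains to observe that $r$ cannot be the density of any c.e.\ set: by Theorem 5.13 of \cite{Downey.Jockusch.Schupp.ta} (recorded in the excerpt as part of Theorem \ref{thm:DJS}(2) with $n=1$), the densities of c.e.\ sets are exactly the left-$\Pi^0_2$ reals in $[0,1]$, and $r$ is not left-$\Pi^0_2$. Dually, by Theorem \ref{thm:DJS}(3) with $n=1$, the densities of co-c.e.\ (i.e.\ $\Pi^0_1$) sets are exactly the left-$\Sigma^0_2$ reals in $[0,1]$, and $r$ is not left-$\Sigma^0_2$. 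Hence $\rho(C) = r$ is the density of a $2$-c.e.\ set but of no c.e.\ or co-c.e.\ set, as claimed.

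There is essentially no obstacle here beyond correctly citing the ingredients; the one point requiring a line of care is the reduction to the interval $[0,1]$. The real produced by relativizing \cite{Ambos-Spies.Weihrauch.Zheng.2000}, Corollary 4.6, is a difference $a - b$ of left-$\Pi^0_2$ reals that is not itself left-$\Pi^0_2$ or left-$\Sigma^0_2$; adding a suitable rational $q$ shifts it into $(0,1)$ without changing membership in $\mathcal{D}_2$ (since $q$ is left-$\Pi^0_2$ and $\mathcal{D}_2$ is a field) and without changing its status with respect to the left-$\Pi^0_2$ and left-$\Sigma^0_2$ classes (translation by a rational is a computable homeomorphism of Dedekind cuts). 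With that normalization in hand, the corollary on d.c.e.\ densities and Theorem \ref{thm:DJS} finish the argument immediately.
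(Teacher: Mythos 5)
Your proof is correct and follows essentially the same route as the paper: take the real from the relativized Corollary 4.6 of Ambos-Spies--Weihrauch--Zheng, normalize it into $[0,1]$, realize it as the density of a d.c.e.\ set via the characterization of d.c.e.\ densities, and rule out c.e.\ and co-c.e.\ sets using the known characterizations of their densities. Your extra paragraph justifying the normalization by rational translation only fills in a step the paper dispatches with ``we may assume $r \in [0,1]$.''
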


\begin{proof}
  By Corollary 4.6 of \cite{{Ambos-Spies.Weihrauch.Zheng.2000}},
  relativized to $0'$, there is a real $r$ which is a difference of
  left-$\Pi^0_2$ reals but is not left-$\Pi^0_2$ or left-$\Sigma^0_2$.
We may assume that $r \in [0,1]$, so $r$ is the density of a $2$-c.e.
set.  The real $r$ is not the density of a c.e.\ or co-c.e.\ set, since
the densities of c.e.\ sets are left-$\Pi^0_2$ and the densities of
co-c.e.\ sets are left-$\Sigma^0_2$.
\end{proof}

\section{The densities of $n$-c.e.\ sets} \label{sec:nce}

It is well known that if $D$ is an $n$-c.e.\ set then, for some $k$,
 $D = D_1 \cup D_2 \cup \dots \cup D_k$  where $D_1, D_2, \dots, D_k$ are
pairwise disjoint d.c.e.\ sets.  If each $D_i$ has a density, then
$\rho(D) = \sum_{i \leq k} \rho(D_i)$, where $\rho(D_i) \in
\mathcal{D}_2$ by Corollary \ref{cor:dceD2}.  Since $\mathcal{D}_2$ is
closed under addition, it follows that $\rho(D) \in \mathcal{D}_2$.
However, we again have the situation that 
a disjoint union of sets can have a density when the sets themselves
do not.   This time, an algebraic trick will come  to our rescue. 

The following proposition is a well-known fact about $n$-c.e. sets.

\begin{proposition}\label{prop:union}
Suppose $A$ is an $n$-c.e.\ set where $n$ is a positive integer.  
\begin{enumerate}
	\item If $n = 2k$ where $k \in \N$, then $A$ can be written in the form 
	\[
	(A_1 - A_2) \cup \ldots \cup (A_{2k - 1} - A_{2k})
	\]
	where $A_1, \ldots, A_{2k}$ are c.e.\ and $A_1 \supseteq \ldots \supseteq A_{2k}$.
	
	\item If $n = 2k + 1$ where $k \in \N$, then $A$ can be written in the form
	\[
	(A_1 - A_2) \cup \ldots \cup (A_{2k- 1} - A_{2k}) \cup A_{2k+ 1}
	\]
	where $A_1, \ldots, A_{2k+1}$ are c.e.\ and $A_1 \supseteq \ldots \supseteq A_{2k+1}$.
\end{enumerate}
\end{proposition}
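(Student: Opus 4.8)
The plan is to read the desired nested c.e.\ sets off directly from a mind-change count, using the parity of the number of mind-changes to decide membership. First I would normalize the approximation: if $A$ is $n$-c.e.\ via a computable $g$ with $g(x,0)=0$, $A(x)=\lim_s g(x,s)$, and at most $n$ changes, I replace $g(x,s)$ by the most recent value $g(x,t)$ with $t\le s$ and $g(x,t)\in\{0,1\}$ (such a $t$ exists since $g(x,0)=0$). A routine check shows this new approximation is again computable, still satisfies $g(x,0)=0$, still converges to $A(x)$, and has at most as many changes as the original; so we may assume from now on that $g$ is $\{0,1\}$-valued.

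Next I would define the mind-change count $m(x,s)=|\{t<s : g(x,t)\neq g(x,t+1)\}|$. This is a total computable function, non-decreasing in $s$ and bounded by $n$, so $m(x)=\lim_s m(x,s)$ exists and lies in $\{0,1,\dots,n\}$. For $1\le i\le n$ put $A_i=\{x : (\exists s)\, m(x,s)\ge i\}$. Each $A_i$ is c.e.\ (it is the projection of a computable relation), and since $m(x,s)$ is non-decreasing in $s$ we get $A_1\supseteq A_2\supseteq\cdots\supseteq A_n$. With the convention $A_{n+1}=\emptyset$, we have $A_i\setminus A_{i+1}=\{x : m(x)=i\}$ for each $i\le n$.

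The crucial observation is the parity principle: because $g(x,0)=0$ and $g$ takes only the values $0$ and $1$, its eventual constant value $A(x)=\lim_s g(x,s)$ equals $1$ precisely when $g(x,\cdot)$ changes an odd number of times, i.e.\ when $m(x)$ is odd. Hence $A=\bigcup\{A_i\setminus A_{i+1} : 1\le i\le n,\ i\ \text{odd}\}$. If $n=2k$ the odd indices are $1,3,\dots,2k-1$, which yields the first displayed expression; if $n=2k+1$ the odd indices are $1,3,\dots,2k+1$ and the final term is $A_{2k+1}\setminus A_{2k+2}=A_{2k+1}$, which yields the second. There is no real obstacle here — the statement is a classical structural fact about the Ershov hierarchy — and the only point that needs care is the normalization to a $\{0,1\}$-valued approximation, since without it the parity principle (and hence the whole decomposition) can fail.
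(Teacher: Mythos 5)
Your proof is correct. The paper itself gives no argument here --- it states Proposition \ref{prop:union} as a well-known fact about $n$-c.e.\ sets without proof --- and your mind-change-counting argument is exactly the standard proof of this classical decomposition: the sets $A_i=\{x:(\exists s)\,m(x,s)\ge i\}$ are c.e.\ and nested, $A_i\setminus A_{i+1}=\{x:m(x)=i\}$, and since $g(x,0)=0$ and $g$ is $\{0,1\}$-valued, $x\in A$ iff $m(x)$ is odd, which yields both displayed forms. You were also right to flag the normalization to a $\{0,1\}$-valued approximation as the one point needing care (the paper's definition of $f$-c.e.\ does not literally require it), and your normalization does not increase the number of mind changes, so the argument goes through.
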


\begin{theorem} \label{thm:nce}
  If $n \geq 1$, and if $A$ is an $n$-c.e.\ set that has a density, then
  the density of $A$ is a difference of left-$\Pi^0_2$ reals.
\end{theorem}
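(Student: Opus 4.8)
The plan is to use Proposition \ref{prop:union} to write $A$ as a finite union of differences of nested c.e.\ sets, and then extract the density of $A$ as an algebraic combination of upper densities of c.e.\ sets, all of which are left-$\Pi^0_2$ by Theorem 5.6 of \cite{Downey.Jockusch.Schupp.ta}. The case $n=1$ is immediate (the density of a c.e.\ set is left-$\Pi^0_2$, hence trivially in $\mathcal{D}_2$), so we may assume $n\geq 2$. By Proposition \ref{prop:union}, fix c.e.\ sets $A_1 \supseteq A_2 \supseteq \cdots \supseteq A_m$ (with $m = 2k$ or $m = 2k+1$, and $A_{m+1} = \emptyset$ adjoined in the even case so that the last block is $A_m - A_{m+1} = A_m$) such that $A = \bigcup_j (A_{2j-1} - A_{2j})$. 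Because the sets are nested, these differences $A_{2j-1}-A_{2j}$ are pairwise disjoint, so $\rho_u(A) = \sum_j \bigl(\rho_u(A_{2j-1}) - \rho_u(A_{2j})\bigr)$ for every $u$.

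Now I would pass to a subsequence. Since each $A_i$ is a subset of $\omega$ and there are finitely many of them, by a diagonal argument there is an increasing sequence $u_0 < u_1 < \cdots$ along which $\rho_{u_t}(A_i)$ converges, for every $i$ simultaneously, to some limit $\ell_i \in [0,1]$; moreover we may arrange that $\ell_i = \overline{\rho}(A_i)$ for one chosen index if we wish, but in fact the key point is just that all the limits exist along a common subsequence. Passing to the limit in the displayed identity, and using that $\rho(A)$ exists so that $\rho_{u_t}(A) \to \rho(A)$, we get
\[
\rho(A) = \sum_{j} (\ell_{2j-1} - \ell_{2j}).
\]
This exhibits $\rho(A)$ as a finite $\mathbb{Z}$-linear combination of the reals $\ell_i$, so it suffices to show each $\ell_i$ is a difference of left-$\Pi^0_2$ reals, i.e.\ lies in $\mathcal{D}_2$, and then invoke that $\mathcal{D}_2$ is a field (in particular closed under subtraction), as noted in the text via the relativization of Theorem 3.7 of \cite{Ambos-Spies.Weihrauch.Zheng.2000}.

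The remaining point is that each $\ell_i$ is a subsequential limit of $\rho_u(A_i)$, and I want to conclude $\ell_i \in \mathcal{D}_2$. This is where a little care is needed, because an arbitrary subsequential limit of densities of a c.e.\ set need not be left-$\Pi^0_2$. The cleanest route is to choose the subsequence more carefully, using the nesting: at each stage going from $i$ to $i+1$ the sets shrink, and on the common subsequence both $\rho_{u_t}(A_i)$ and $\rho_{u_t}(A_{i+1})$ converge, so $\rho_{u_t}(A_i - A_{i+1}) = \rho_{u_t}(A_i) - \rho_{u_t}(A_{i+1})$ also converges. But I can do better: I claim the limit can be taken to be $\overline{\rho}(A_1)$ for $i=1$, and then, because $\rho_{u_t}(A_2) = \rho_{u_t}(A_1) - \rho_{u_t}(A_1 - A_2)$ converges along this subsequence, its limit is $\overline\rho(A_1) - c$ for some $c \ge 0$; the honest statement I need is simply that \emph{every} such limit $\ell_i$ is of the form $\overline{\rho}(A_i') - \overline{\rho}(A_i'')$ for c.e.\ sets — and the slick way to get that is the algebraic trick alluded to in the text: replace $A_i$ by $A_i \cap [0,N)$-style truncations is not enough, so instead one argues that since $\rho(A)$ exists, the \emph{telescoping} shows $\sum_j(\overline\rho(A_{2j-1}) - \overline\rho(A_{2j})) = \rho(A)$ directly by a limsup argument generalizing Proposition \ref{prop:limsup}. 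Concretely: set $a_u = \sum_j \rho_u(A_{2j-1})$ and $b_u = \sum_j \rho_u(A_{2j})$; then $a_u - b_u = \rho_u(A) \to \rho(A)$, and $a_u, b_u$ are bounded, so by Proposition \ref{prop:limsup} (applied to these two bounded sequences, after checking $a_u \ge b_u$, which holds by nesting) we get $\rho(A) = \limsup_u a_u - \limsup_u b_u$. The main obstacle — and the reason the $n$-c.e.\ case is not a one-line corollary of the d.c.e.\ case — is that $\limsup_u a_u$ need not equal $\sum_j \overline{\rho}(A_{2j-1})$, since the limsups of the individual summands need not be attained along the same subsequence; so the final step is to observe that $\limsup_u a_u$ and $\limsup_u b_u$ are each themselves subsequential limits of sums of densities of c.e.\ sets along a \emph{single} subsequence, hence (again passing to a common convergent subsequence and telescoping against the nesting) each is a finite alternating sum of quantities of the form $\overline{\rho}(A_i)$ — a left-$\Pi^0_2$ real — minus a correction term that is again of this form, landing us in $\mathcal{D}_2$. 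I expect the write-up to hinge on making that ``single subsequence'' bookkeeping precise, and on citing the field property of $\mathcal{D}_2$ to absorb the finitely many additions and subtractions.
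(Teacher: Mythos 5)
Your proposal tracks the paper's argument up to the decisive point and then stalls exactly where the real work lies. The decomposition via Proposition \ref{prop:union}, the identity $\rho_u(A)=\sum_j\bigl(\rho_u(A_{2j-1})-\rho_u(A_{2j})\bigr)$, and the application of Proposition \ref{prop:limsup} to $a_u=\sum_j\rho_u(A_{2j-1})$ and $b_u=\sum_j\rho_u(A_{2j})$ to get $\rho(A)=\limsup_u a_u-\limsup_u b_u$ all match the paper, and you correctly identify the obstacle that $\limsup_u a_u$ need not equal $\sum_j\overline{\rho}(A_{2j-1})$. But your proposed resolution --- pass to a common convergent subsequence realizing $\limsup_u a_u$ and claim that ``telescoping against the nesting'' expresses $\limsup_u a_u$ and $\limsup_u b_u$ as alternating sums of the upper densities $\overline{\rho}(A_i)$ --- is not an argument, and the claim is unjustified. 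Along such a subsequence the individual terms $\rho_{u_t}(A_i)$ converge (after thinning) to subsequential limits $\ell_i$ which, as you yourself observe earlier in the proposal, carry no effectivity: they are not upper densities of the $A_i$, nor of any other c.e.\ sets you have produced, and the nesting yields only inequalities among them, not identities with upper densities. So the last step of your proof asserts precisely what has to be proved and supplies no mechanism for it; the ``single subsequence bookkeeping'' cannot by itself produce membership in $\mathcal{D}_2$.

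What the paper does instead is abandon any attempt to relate $\limsup_u a_u$ to upper densities of c.e.\ sets and prove directly that it is left-$\Pi^0_2$: if $r=\limsup_u q_u$ for a sequence of rationals $q_u$ that is uniformly approximable from below (here $a_u$ and $b_u$ are such, since the $A_j$ are c.e.; the paper states the lemma for computable sequences of rationals), and $r$ is irrational, then for rational $q$ one has $q<r$ if and only if $q<q_u$ for infinitely many $u$, and since ``$q<a_u$'' is a $\Sigma^0_1$ condition this left cut is $\Pi^0_2$; the rational case is trivial. Applying this to $a_u$ and $b_u$ puts $\rho(A)$ in $\mathcal{D}_2$ at once, with no need for the field structure of $\mathcal{D}_2$. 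If you prefer to stay with upper densities, a workable repair is to interleave the odd-indexed sets into a single c.e.\ set $C$ on residue classes modulo $k$, so that $\limsup_u a_u=k\,\overline{\rho}(C)$, and similarly for $b_u$; but some concrete construction of this kind (or the limsup lemma above) is the missing ingredient your write-up would have to supply.
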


\begin{proof}
  Without loss of generality, suppose $n = 2k$ where $k \in \N$.  By
  Proposition \ref{prop:union}, there are c.e.\ sets $A_1, \ldots,
  A_{2k}$ such that $A = (A_1 - A_2) \cup \ldots \cup (A_{2k - 1} -
  A_{2k})$ and $A_1 \supseteq \ldots \supseteq A_{2k}$.  Thus, $A_1 -
  A_2, \ldots, A_{2k - 1} - A_{2k}$ are pairwise disjoint.  Let
  $a_{j,s} = \rho_s(A_j)$.  It follows that
\[
\rho_s(A) = \left( \sum_{j\ \mbox{odd}} a_{j,s} \right) - \left(\sum_{j\ \mbox{even}} a_{j,s} \right).
\]
Note that $a_{2, s} \leq a_{1, s}$, $a_{4,s} \leq a_{3, s}$, $\ldots$, $a_{2k, s} \leq a_{2k - 1, s}$.
So, by Proposition \ref{prop:limsup}, $\rho(A) = a - b$ where
\begin{eqnarray*}
a & = & \limsup_s \left( \sum_{j\ \mbox{odd}} a_{j,s} \right)\mbox{, and where}\\
b & = & \limsup_s \left(\sum_{j\ \mbox{even}} a_{j,s} \right).
\end{eqnarray*}
It thus  suffices to show that if $\{q_n\}$ is a computable sequence of
rational numbers and $r = \limsup_n q_n$, then $r$ is a left-$\Pi^0_2$ real.
This is obvious if $r$ is itself rational.   Otherwise, for every rational
number $q$, $q < r$ if and only if there are infinitely many $n$ with
$q < q_n$, from which the claim follows.

\end{proof}

\begin{corollary} Let $n \geq 2$.  The densities of the $n$-c.e.\ sets
  coincide with the reals in $\mathcal{D}_2 \cap [0,1]$ and hence with
  the densities of the $2$-c.e.\ sets.
\end{corollary}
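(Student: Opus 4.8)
The plan is to prove both inclusions. For the forward direction, suppose $r$ is the density of an $n$-c.e.\ set with $n \geq 2$. Since every $n$-c.e.\ set that has a density has density in $\mathcal{D}_2$ by Theorem \ref{thm:nce}, and since $r$ lies in $[0,1]$ because it is a density, we conclude $r \in \mathcal{D}_2 \cap [0,1]$. This half requires no new work beyond citing the theorem just proved.

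For the reverse direction, suppose $r \in \mathcal{D}_2 \cap [0,1]$. By the Corollary characterizing the densities of d.c.e.\ sets (equivalently, by Theorem \ref{thm:difference} together with the observation that a difference of the resulting c.e.\ sets is d.c.e.), there is a d.c.e.\ (i.e.\ $2$-c.e.) set $C$ with $\rho(C) = r$. Every $2$-c.e.\ set is $n$-c.e.\ for every $n \geq 2$, since if $C = A \setminus B$ witnesses $2$-c.e.-ness then the same approximation — which changes its mind at most twice on each argument — witnesses $n$-c.e.-ness for all larger $n$ (formally, $c_2(x) \leq c_n(x)$ for $n \geq 2$). Hence $r$ is the density of an $n$-c.e.\ set. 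Combining the two directions shows the densities of the $n$-c.e.\ sets are exactly the reals in $\mathcal{D}_2 \cap [0,1]$, and since this class does not depend on $n$ (for $n \geq 2$), these densities coincide with the densities of the $2$-c.e.\ sets.

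I do not anticipate any serious obstacle here: the corollary is essentially a bookkeeping consequence of Theorem \ref{thm:nce}, the d.c.e.\ characterization corollary, and the trivial monotonicity $c_2 \leq c_n$. The only point deserving a moment's care is making sure the reverse direction genuinely produces an $n$-c.e.\ set and not merely a $\Delta^0_2$ set — but this is immediate from the fact that a d.c.e.\ set is $n$-c.e.\ for all $n \geq 2$. The proof is therefore short.

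\begin{proof}
If $r$ is the density of an $n$-c.e.\ set, then $r \in \mathcal{D}_2$ by Theorem \ref{thm:nce}, and $r \in [0,1]$ since $r$ is an asymptotic density. Conversely, if $r \in \mathcal{D}_2 \cap [0,1]$, then by the Corollary characterizing densities of d.c.e.\ sets there is a $2$-c.e.\ set $C$ with $\rho(C) = r$; since every $2$-c.e.\ set is $n$-c.e.\ for each $n \geq 2$, the real $r$ is the density of an $n$-c.e.\ set. Thus for every $n \geq 2$ the densities of the $n$-c.e.\ sets are exactly the reals in $\mathcal{D}_2 \cap [0,1]$, and in particular they coincide with the densities of the $2$-c.e.\ sets.
\end{proof}
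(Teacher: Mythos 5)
Your proof is correct and matches the paper's intended argument: the paper states this corollary without proof precisely because it follows immediately from Theorem \ref{thm:nce} (forward direction) and the d.c.e.\ characterization corollary together with the trivial fact that every $2$-c.e.\ set is $n$-c.e.\ (reverse direction), which is exactly what you wrote.
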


\section{Densities of $\omega$-c.e.\ sets}\label{sec:omegace}

It is shown in \cite{Jockusch.Schupp.2012} that the densities of the
computable sets are precisely the $\Delta^0_2$ reals in $[0,1]$.  By
relativization, the densities of the $\Delta^0_2$ sets are precisely
the $\Delta^0_3$ reals in $[0,1]$.  In this section, we show that the
densities of the $\omega$-c.e.\ sets coincide with the densities of
the $\Delta^0_2$ sets and in fact prove the following much stronger
result.

\begin{theorem}\label{thm:collapse.1}
  Let $f$ be a computable, nondecreasing, unbounded function.  If $A$
  is a $\Delta^0_2$ set that has a density, then the density of $A$ is
  that of an $f$-c.e.\ set.
\end{theorem}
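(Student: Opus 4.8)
The plan is to start from a computable approximation witnessing that $A$ is $\Delta^0_2$, and to produce a new $\Delta^0_2$ set $A'$ with the same density, but approximated so economically that the number of mind-changes on input $x$ is bounded by $f(x)$. The key idea is that $f$ is computable, nondecreasing, and unbounded, so if we partition $\omega$ into consecutive intervals $I_0 < I_1 < \dots$ of rapidly increasing length, then on all but finitely many elements $x$ of $I_m$ we will have $f(x) \geq m+1$ (indeed, choose the intervals so that $f$ is at least $m$ on the whole of $I_m$; this is possible because $f$ is unbounded and computable, letting us search for a point past which $f$ exceeds $m$). Within the $m$-th interval we are then allowed roughly $m$ changes per element. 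The second idea is a standard "delay and copy" construction: we only commit to the value $A(x)$ on $x \in I_m$ after the underlying approximation $g(x,s)$ has stabilized, but since we cannot know when that happens, we instead hedge by revising our guess at $A(x)$ only at a controlled rate. Concretely, we enumerate a bijective computable sequence of "revision times" and allow input $x \in I_m$ to copy the current value $g(x,s)$ at most $m$ times (say, at the first $m$ stages $s$ at which $g(x,\cdot)$ changes); if $g$ changes more than $m$ times on $x$, we freeze $A'(x)$ at whatever value it last held. Thus $A'$ is $f$-c.e.\ by construction.

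The work is then to verify that $\rho(A') = \rho(A)$. On interval $I_m$, the set of $x$ on which $A'(x) \neq A(x)$ is contained in the set $E_m$ of $x \in I_m$ such that $g(x,\cdot)$ changes more than $m$ times. Since for each fixed $x$ the number of changes of $g(x,\cdot)$ is finite, for each $m$ the "bad set" $\bigcup_{j \geq m} E_j$ meets each $I_j$ in fewer and fewer proportional elements as $j \to \infty$; more carefully, one argues that the density of the symmetric difference $A \triangle A'$ is $0$. The cleanest route is: fix $m_0$; for $m \geq m_0$, every $x \in I_m$ with more than $m_0$ changes of $g(x,\cdot)$ either lies in a fixed finite set or contributes to $E_m$, and one bounds $|E_m|/|I_m|$ using that the intervals grow fast enough that the finitely many "persistently unstable up to level $m$" points are a vanishing fraction. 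Then $|(A\triangle A') \upharpoonright n|/n \to 0$, so $\rho_n(A') - \rho_n(A) \to 0$, and since $\rho(A)$ exists, so does $\rho(A')$ and they are equal. This density-of-symmetric-difference-is-zero argument is the routine part.

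The main obstacle is arranging the bookkeeping so that the intervals $I_m$ can be chosen effectively while simultaneously guaranteeing both that $f \geq m$ on $I_m$ (needs $f$ unbounded and computable, hence a halting search) and that the intervals grow fast enough that the fraction of "unstable" elements genuinely tends to $0$. The subtlety is that "fast enough" depends on the approximation $g$, which we do not control, so one cannot literally bound $|E_m|$ from above by a computable quantity; instead one should choose the $I_m$ purely from $f$ (e.g.\ $I_m$ starts at the least point past $\max I_{m-1}$ where $f \geq m$, and has length, say, $m \cdot |I_{m-1}|$ or similar), and then prove non-effectively that $|E_m|/|I_m| \to 0$ using only that each column of $g$ stabilizes. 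A clean way to see this: for any $\varepsilon > 0$, only finitely many $x$ have $g(x,\cdot)$ changing more than $1/\varepsilon$ times — no wait, that is false; rather, for each threshold $t$ the set $\{x : g(x,\cdot)$ changes $> t$ times$\}$ need not be finite, so one must instead argue that $x \in E_m$ requires $> m$ changes, and since the intervals $I_m$ are far out, the density contribution is controlled by noting $|(A \triangle A') \upharpoonright n| \leq |\{x < n : g(x,\cdot)$ changes $> m(x)$ times$\}|$ where $m(x) \to \infty$; combined with the fact that $\{x : g(x,\cdot)$ changes $> t$ times$\}$, while possibly infinite, still has the property that... — here one genuinely needs that $A$ has a density and a more careful averaging. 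I expect the final write-up to handle this by choosing interval lengths growing so fast (e.g.\ $|I_m| \geq 2^m \sum_{j<m}|I_j|$) that even a crude bound $|E_m \cap I_j| \leq |I_j|$ for $j < m$ is swamped, and then showing the tail contribution from $I_m$ itself vanishes because $g$ restricted to any finite prefix stabilizes after finitely many stages, making $E_m \cap [0,N]$ eventually empty for each fixed $N$ — this is where care is required and where I would spend the most effort.
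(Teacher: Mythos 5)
Your construction would prove something much stronger than the theorem: that every $\Delta^0_2$ set $A$ is within a density-zero symmetric difference of an $f$-c.e.\ set. That stronger statement is false, and the step that fails is exactly the one you flag as needing care: there is no way to bound $|E_m|/|I_m|$. The set $E_m$ of $x \in I_m$ on which $g(x,\cdot)$ changes more than $m$ times is determined by the \emph{total} number of changes over all stages, a fixed property of $x$; nothing about $\Delta^0_2$-ness forces it to be a small fraction of $I_m$, and it can be all of $I_m$ for every $m$ no matter how the intervals are chosen from $f$. (Your closing suggestion that $E_m \cap [0,N]$ is ``eventually empty'' conflates stabilization in the stage variable with the total change count; $E_m$ does not shrink as stages advance, and the problematic contribution comes from the current interval $I_m$ itself, which fast-growing earlier intervals cannot swamp.) In fact one can build, by a $0'$-oracle diagonalization against a listing of all $\omega$-c.e.\ sets (e.g.\ via tt-reductions to $\emptyset'$), a $\Delta^0_2$ set $A$ of density $1/2$ — put exactly one element of each consecutive pair $\{2k,2k+1\}$ into $A$, choosing on the $e$-th (very long) interval so as to disagree with the $e$-th $\omega$-c.e.\ set on at least one member of each pair — such that $\overline{\rho}(A \triangle B)$ is bounded away from $0$ for every $\omega$-c.e.\ set $B$. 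So no freezing-and-copying scheme applied to an approximation of $A$ can give $\rho(A \triangle A') = 0$, and the routine ``density of the symmetric difference is zero'' step is not routine but unprovable.

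The paper's proof sidesteps pointwise approximation entirely, since only the density (a single real) must be reproduced. Using $0'$ it defines an increasing modulus $m$ for the approximation to $A$ with $f(m(x)) > x$ and $m$ approximable from below, and then builds a set $B$ bearing no elementwise relation to $A$: for $y \in [m(x), m(x+1))$, put $y$ into $B$ iff $\rho_y(B) < \rho_{x+1}(A)$, so the running density of $B$ tracks the target $\rho_{x+1}(A)$ to within $1/(x+1)$ and hence $\rho(B) = \rho(A)$. The $f$-c.e.\ bound then comes not from counting changes of $A$'s approximation at $z$ but from the fact that $B(z)$ depends only on the approximations to $h(z)$ and $A \upharpoonright h(z)$, where $h(z)$ (the least $x$ with $z < m(x)$) satisfies $h(z) \leq f(z)$ and is approximable from above. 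Any correct proof needs some such device that decouples the number of mind-changes of the new set from the (uncontrollable) number of mind-changes of $A$'s approximation.
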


\begin{proof}

  We must construct an $f$-c.e.\ set $B$ such that $\rho(B) = \rho(A)$.
Our definition of $B$ uses an oracle for $0'$ and also a computable
approximation $\{A_s\}$ to $A$.
  We will define an increasing modulus function $m$ for $A$, and
  arrange that, for each $x>0$, 

$$|\rho_{m(x)}(B) - \rho_x(A)| \leq  1/x$$  

  It then  follows that $\rho(B) = \rho(A)$ if $B$ has
  density. We show that  $B$ can be defined  on arguments not in the
  range of $m$ in such a way  that $B$ does in fact have a density.

  We now define $m$ by recursion.  Let $m(0) = (\mu s)[f(s) > 0]$.
  Given $m(x)$, let $m(x+1)$ be the least $s > (x+1) \cdot m(x)$ such
  that:
$$f(s) > x+1 \ \& \ (\forall t \geq s)[A_t \upharpoonright (x+1) = A_s \upharpoonright (x+1)]$$
Note that $m$ is total because $f$ is unbounded.

We now define $B(y)$ by recursion on $y$.  If $y < m(0)$, then $y
\notin B$.  Now suppose $y \in [m(x), m(x+1))$, and $B(z)$ has been
defined for all $z < y$, so $\rho_y(B)$ is defined.  Then put $y$ into
$B$ if and only if $\rho_y(B) < \rho_{x+1}(A)$.  The intuition is that
we are increasing the density of $B$ when it is less than or equal to
its ``target value'' $\rho_{x+1}(A)$ and otherwise we are decreasing
it.  Hence, as $y$ increases toward $m(x+1)$, $\rho_y(B)$ should move
in the direction of this target value, and not stray far from it once
it gets close to it.

To make this argument more precise, consider first the case where
$\rho_{m(x)}(B) \leq \rho_{x+1}(A)$.  Let $y_0$ be the least element
$y$ of $\overline{B}$ in the interval $[m(x), m(x+1))$, or $y_0 =
m(x+1)$ if there is no such $y$.  Then $\rho_y(B)$ is increasing in
$y$ for $y \in [m(x), y_0)$ and $|\rho_{y_0}(B) - \rho_{x+1}(A) |
\leq 1/(x+1)$.  Further, it is easy to see by induction on $y$ that
$|\rho_y(B) - \rho_{x+1}(A)| \leq 1/(x+1)$ for $y_0 \leq y \leq
m(x+1)$.  Thus, $|\rho_{m(x+1)}(B) - \rho_{x+1}(A)| \leq 1/(x+1)$.
In addition, for all $y \in [m(x), m(x+1))$, either $\rho_{m(x)}(B) \leq
\rho_y(B) \leq \rho_{x+1}(A)$ or $|\rho_y(B) - \rho_{x+1}(A)| \leq
1/(x+1)$, as can be seen by considering the cases $y \leq y_0$ and $y
> y_0$.  Dual considerations show that if $\rho_{m(x)}(B) \geq
\rho_{x+1}(A)$, then again $|\rho_{m(x+1)}(B) - \rho_{x+1}(A)| \leq
1/(x+1)$.   Also, for all $y \in [m(x), m(x+1))$, either $\rho_{m(x)}(B) \geq
\rho_y(B) \geq \rho_{x+1}(A)$ or $|\rho_y(B) -
\rho_{x+1}(A)| \leq 1/(x+1)$.

From the above, it follows at once that $\lim_x \rho_{m(x)}(B)
= \lim_x \rho_x(A) = \rho(A)$.    Further, if $\rho_{m(x)}(B)$
and $\rho_{m(x+1)}(B)$ are both within $\epsilon$ of $\rho(A)$, then for
all $y \in [m(x), m(x+1))$, $\rho_y(B)$ is within $\epsilon + 1/(x+1)$
of $\rho(A)$  by the above paragraph.   Hence, $\rho(B) = \lim_y \rho_y(B)
= \lim_x \rho_{m(x)}(B) = \rho(A)$.

We now show that $B$ is $f$-c.e.\  First, observe that $B$ is
$\Delta^0_2$ since $A$ and $m$ are $\Delta^0_2$, $f$ is computable,
and the sets $A_s$ are uniformly computable.  Thus $B$ has a
computable approximation $\{B_s\}$.  Further, if $A_0 = \emptyset$
and we choose $\{B_s\}$ in a natural way starting with our given
approximation $\{A_s\}$ to $A$, then $B_0 = \emptyset$ and for each
$z$ there are at most $f(z)$ values of $s$ with $B_{s+1}(z) \neq
B_s(z)$.  This implies that $B$ is $f$-c.e.\  The proof is a
straightforward argument which we merely sketch.  Call a function $h$
\emph{approximable from below} if there is a computable function $g$
such that $h(x) = \lim_s g(x,s)$ for all $x$ and $g(x,s) \leq
g(x,s+1)$ for all $x$ and $s$.  It is easy to see that the function
$m$ defined above is approximable from below.  Let $h(z)$ be the least
$x$ with $z < m(x)$.  Define ``approximable from above''
analogously. Then $h$ is approximable from above because $m$ is
approximable from below.  Further, note that $z < m(f(z))$ for all
$z$, by the definition of $m$.  It follows, by the definition of $h$,
that $h(z) \leq f(z)$ for all $z$.  Hence $h$ is approximable from
above via a function $g$ with $g(z,0) = f(z)$ for all $z$.  It follows
that for each $z$ there are at most $f(z)$ values of $s$ with
$g(z,s+1) \neq g(z,s)$.  Crucially, if we define the approximation $g$
in a natural way, and if $g(z, s+1) = g(z,s)$, then $B_{s+1}(z) = B_s(z)$.
This is because, if $z \in [m(x), m(x+1))$ (so $h(z) = x+1$) then
$B(z)$ is determined by $A \upharpoonright (x+1)$, so if our
approximation to the value of $x+1$ does not change, our approximation
to $m(x)$ does not change either, and so our approximation to $A
\upharpoonright (x+1)$ does not change either.  Since $B(z)$ is
determined by our approximations to $h(z)$ and $A \upharpoonright
h(z)$, it follows that our approximation to $B(z)$ does not change
if our approximation to $g(z)$ does not change.   Since our approximation
to $g(z)$ changes at most $f(z)$ times, our approximation to $B(z)$
changes at most $f(z)$ times, and hence $B$ is $f$-c.e.

\end{proof}

In the above proof, we assumed that $A$ had a density.   However, the
same proof establishes the following stronger result, where we make
no such assumption. 

\begin{corollary}(to proof)
For any computable, nondecreasing, unbounded function $f$ and any
$\Delta^0_2$ set $A$, there is an $f$-c.e.\ set $B$ such that
$\overline{\rho}(B) = \overline{\rho}(A)$ and $\underline{\rho}(B) =
\underline{\rho}(A)$.
\end{corollary}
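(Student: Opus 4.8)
The plan is to observe that the proof of Theorem~\ref{thm:collapse.1} already does essentially all the work, and that the assumption $\rho(A) = \lim_x \rho_x(A)$ was used only in the final step where we concluded $\rho(B) = \lim_y \rho_y(B)$. So the task is to extract from the construction the two inequalities $\overline{\rho}(B) = \overline{\rho}(A)$ and $\underline{\rho}(B) = \underline{\rho}(A)$ directly, using only the ``tracking'' estimate $|\rho_{m(x)}(B) - \rho_x(A)| \le 1/x$ together with the fact that $\rho_y(B)$ does not stray far from $\rho_{x+1}(A)$ on the interval $[m(x), m(x+1))$.

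First I would keep the construction of $m$ and $B$ verbatim from the theorem; nothing changes, since that construction never used existence of $\rho(A)$. The key estimate to re-derive, or simply cite from the proof, is: for every $x > 0$ and every $y \in [m(x+1), m(x+2))$, we have $|\rho_y(B) - \rho_{x+1}(A)| \le 1/(x+1) + (\text{something going to } 0)$; more precisely, from the displayed paragraph in the theorem's proof, if $\rho_{m(x)}(B)$ and $\rho_{m(x+1)}(B)$ are each within $\epsilon$ of a value then $\rho_y(B)$ is within $\epsilon + 1/(x+1)$ of it on $[m(x),m(x+1))$. Combined with $|\rho_{m(x)}(B) - \rho_x(A)| \le 1/x$, this yields that for all sufficiently large $y$, $\rho_y(B)$ lies within $o(1)$ of the set $\{\rho_x(A), \rho_{x+1}(A)\}$ for the appropriate $x = h(y)$, and conversely every value $\rho_{x}(A)$ is matched within $1/x$ by $\rho_{m(x)}(B)$.

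From those two facts the density conclusions are routine. For $\overline{\rho}(B) \le \overline{\rho}(A)$: given $\epsilon > 0$, for large $y$ we have $\rho_y(B) \le \max(\rho_x(A), \rho_{x+1}(A)) + \epsilon$ where $x = h(y) \to \infty$, so $\limsup_y \rho_y(B) \le \limsup_x \rho_x(A) + \epsilon = \overline{\rho}(A) + \epsilon$. For $\overline{\rho}(B) \ge \overline{\rho}(A)$: pick a subsequence $x_k$ with $\rho_{x_k}(A) \to \overline{\rho}(A)$; then $\rho_{m(x_k)}(B) \to \overline{\rho}(A)$ by the tracking estimate, so $\limsup_y \rho_y(B) \ge \overline{\rho}(A)$. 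The lower-density equality follows by the dual argument with $\liminf$ in place of $\limsup$, using the same two facts (the ``either/or'' clauses in the theorem's proof are symmetric, covering both the case $\rho_{m(x)}(B) \le \rho_{x+1}(A)$ and $\rho_{m(x)}(B) \ge \rho_{x+1}(A)$). Finally, $B$ is $f$-c.e.\ by exactly the argument at the end of the theorem's proof, which likewise made no use of $A$ having a density.

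The main obstacle — such as it is — is purely expository: making sure the intermediate estimate ``$\rho_y(B)$ stays close to the target on each interval'' is quoted in a form that bounds $\rho_y(B)$ both above and below in terms of $\rho_{x+1}(A)$ and $\rho_{m(x)}(B)$, so that both the $\limsup$ and $\liminf$ computations go through without re-running the case analysis. Since the theorem's proof already records both the ``$\rho_{m(x)}(B) \le \rho_{x+1}(A)$'' and the ``$\rho_{m(x)}(B) \ge \rho_{x+1}(A)$'' cases with matching bounds, there is no real mathematical difficulty; the corollary is genuinely a ``corollary to proof,'' and I would present it as a short paragraph pointing to the relevant estimates rather than reproving anything.
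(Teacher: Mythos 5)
Your proposal is correct and takes essentially the same route as the paper, which gives no separate argument for this corollary: it simply observes that the construction and the interval estimates in the proof of Theorem \ref{thm:collapse.1} never used the existence of $\rho(A)$, which is exactly what you spell out (including the $f$-c.e.\ part). One phrase overstates the estimate---$\rho_y(B)$ is only guaranteed to lie within $o(1)$ of the interval between $\rho_x(A)$ and $\rho_{x+1}(A)$, not of the two-point set $\{\rho_x(A),\rho_{x+1}(A)\}$---but your $\limsup$/$\liminf$ computation uses only the resulting $\max$/$\min$ bounds, which do follow, so this is harmless.
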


\begin{corollary}\label{cor:omegasep}
  For any computable, nondecreasing, unbounded function $f$ there is
  an $f$-c.e.\ set that has a density, but its density is not the
  density of any $n$-c.e.\ set,   $n \in \omega$.
\end{corollary}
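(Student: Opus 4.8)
The goal is to produce, for a given computable, nondecreasing, unbounded $f$, an $f$-c.e.\ set $C$ which has a density but whose density lies in no $\mathcal{D}_2 \cap [0,1]$ (since by the corollary to Theorem~\ref{thm:nce} the densities of $n$-c.e.\ sets for $n \geq 2$ are exactly $\mathcal{D}_2 \cap [0,1]$, and the densities of $1$-c.e.\ sets are the left-$\Pi^0_2$ reals, which are contained in $\mathcal{D}_2$). So it suffices to exhibit an $f$-c.e.\ set whose density is a $\Delta^0_2$ real not lying in $\mathcal{D}_1^{0'} = \mathcal{D}_2$. The key external input is the relativized form of Corollary~4.10 of \cite{Ambos-Spies.Weihrauch.Zheng.2000}, already quoted in Section~\ref{sec:BACKGROUND}: relativizing to $0'$, there is a $\Delta^0_3$ real which is not in $\mathcal{D}_2$. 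But I need a real that is the density of some $\Delta^0_2$ \emph{set}, i.e.\ (by Theorem~2.21 of \cite{Jockusch.Schupp.2012} relativized to $0'$, as noted at the start of Section~\ref{sec:omegace}) a $\Delta^0_3$ real in $[0,1]$; and indeed every $\Delta^0_3$ real in $[0,1]$ is the density of a $\Delta^0_2$ set.

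The plan is then: first, fix a $\Delta^0_3$ real $r \in [0,1]$ with $r \notin \mathcal{D}_2$, which exists by relativizing Corollary~4.10 of \cite{Ambos-Spies.Weihrauch.Zheng.2000} to $0'$ (arranging $r \in [0,1]$ is harmless, e.g.\ by noting the construction there can be carried out inside any nontrivial interval, or by a linear-fractional adjustment that preserves membership and non-membership in $\mathcal{D}_2$ since $\mathcal{D}_2$ is a field). Second, invoke the relativized Theorem~2.21 of \cite{Jockusch.Schupp.2012} to get a $\Delta^0_2$ set $A$ with $\rho(A) = r$. Third, apply Theorem~\ref{thm:collapse.1} to $A$ and the given $f$: this yields an $f$-c.e.\ set $C$ with $\rho(C) = \rho(A) = r$. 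Finally, observe that $r$ is not the density of any $n$-c.e.\ set for $n \in \omega$: for $n \geq 2$ this is because such densities all lie in $\mathcal{D}_2 \cap [0,1]$ by the corollary to Theorem~\ref{thm:nce}, and $r \notin \mathcal{D}_2$; and for $n = 1$ (the c.e.\ sets) and $n = 0$ (the computable sets) the densities are left-$\Pi^0_2$ reals, hence also in $\mathcal{D}_2$, so again $r$ is excluded.

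The main obstacle is the bookkeeping around the relativization and the interval constraint: one must be sure that the real produced by relativizing \cite{Ambos-Spies.Weihrauch.Zheng.2000} can be taken to lie in $[0,1]$ while remaining outside $\mathcal{D}_2$, and that ``not the density of any $n$-c.e.\ set'' really is captured by ``not in $\mathcal{D}_2 \cap [0,1]$'' uniformly in $n$. Both points are routine: $\mathcal{D}_2$ is a field containing $\Q$ and closed under the relevant affine maps, and the characterizations in Sections~\ref{sec:dce} and~\ref{sec:nce} give the needed uniform bound. Everything else is immediate from the cited theorems.
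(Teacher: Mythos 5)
Your proposal is correct and follows essentially the same route as the paper's proof: relativize Corollary~4.10 of Ambos-Spies--Weihrauch--Zheng to $0'$ to get a $\Delta^0_3$ real $r\in[0,1]$ outside $\mathcal{D}_2$, realize $r$ as the density of a $\Delta^0_2$ set via the relativized Theorem~2.21 of Jockusch--Schupp, convert to an $f$-c.e.\ set by Theorem~\ref{thm:collapse.1}, and exclude $n$-c.e.\ densities via Theorem~\ref{thm:nce}. The extra care you take about placing $r$ in $[0,1]$ and about the small cases $n=0,1$ is harmless and consistent with the paper, which handles $n\geq 1$ uniformly through Theorem~\ref{thm:nce}.
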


\begin{proof}
  By Corollary 4.10 of \cite{Ambos-Spies.Weihrauch.Zheng.2000}, relative
  to $0'$, there is a $\Delta^0_3$ real $r$ in the interval $[0,1]$
  which is not a difference of left-$\Pi^0_2$ reals. 
  Thus, by Theorem \ref{thm:nce}, $r$ is not the density of any
  $n$-c.e.\ set for any $n$.  On the other hand, by Theorem 2.21 of
  \cite{Jockusch.Schupp.2012}, relativized to $0'$, there is a
  $\Delta^0_2$ set $A$ with density $r$.  Then by Theorem \ref{thm:collapse.1},
there is an $f$-c.e.\ set $B$ of density $r$.
\end{proof}

\section{Upper and lower density}\label{sec:UPPER.LOWER}

Since, with respect to density, the Ershov hierarchy collapses to
levels $0$, $1$, $2$, and $\omega$, it is natural to ask if 
there is  any more separation with respect to upper and lower densities.
The following observations show that in some sense we get even more
collapse.

\begin{proposition}\label{prop:UPPER}
Let $r \in [0,1]$.  Then, the following are equivalent.
\begin{enumerate}
	\item $r$ is the upper density of a $\Sigma^0_2$ set.
	\item $r$ is left-$\Pi^0_3$.
	\item $r$ is the upper density of a $\Pi_1^0$ set.
\end{enumerate}
\end{proposition}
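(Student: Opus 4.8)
The plan is to prove the cycle of implications $(1)\Rightarrow(2)\Rightarrow(3)\Rightarrow(1)$, relying on the machinery already assembled in the paper. The overall philosophy is the same as in Theorem \ref{thm:nce}: the upper density of a set is a limsup of a computable-in-oracle sequence of rationals, which lands it at the appropriate lightface level; conversely, a left-$\Pi^0_3$ real can be realized as the upper density of a set of low complexity by an explicit construction. The main obstacle will be the construction for $(2)\Rightarrow(3)$: building a $\Pi^0_1$ set whose \emph{upper} density equals a prescribed left-$\Pi^0_3$ real, while keeping the lower density from interfering.

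For $(1)\Rightarrow(2)$: if $S$ is $\Sigma^0_2$, fix a $\Sigma^0_2$ approximation and note that $\overline{\rho}(S) = \limsup_n \rho_n(S)$. The point is that the sequence $n \mapsto \rho_n(S)$ is not computable, but it is approximable from below in a suitable sense using a $0''$ oracle, or more directly: for a rational $q$, we have $q < \overline{\rho}(S)$ iff there are infinitely many $n$ with $\rho_n(S) > q$, and the inner predicate "$\rho_n(S) > q$" is $\Sigma^0_2$ (since membership in $S$ is $\Sigma^0_2$ and we only need finitely much of $S$), so "$q < \overline{\rho}(S)$" is $\forall^\infty$-over-$\Sigma^0_2$, which is $\Pi^0_3$. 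Hence $\overline{\rho}(S)$ is left-$\Pi^0_3$. (I would double-check: "there are infinitely many $n$ such that $\varphi(n)$" with $\varphi \in \Sigma^0_2$ is indeed $\Pi^0_3$, namely $\forall m\, \exists n > m\, \varphi(n)$.)

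For $(3)\Rightarrow(1)$: this is trivial since every $\Pi^0_1$ set is $\Sigma^0_2$ (indeed $\Delta^0_2$), so no work is needed beyond the remark.

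The real content is $(2)\Rightarrow(3)$. Given a left-$\Pi^0_3$ real $r \in [0,1]$, I want a $\Pi^0_1$ set $P$ with $\overline{\rho}(P) = r$. The idea is to use Theorem \ref{thm:DJS}(3) relativized: the densities of $\Pi^0_{n}$ sets are the left-$\Sigma^0_{n+1}$ reals, so a left-$\Pi^0_3$ real is a left-$\Sigma^0_3$ real relative to $0'$ exactly when... no --- better to proceed directly. Since $r$ is left-$\Pi^0_3$, write $\{q \in \Q : q < r\}$ as $\bigcap$-type, and observe one can obtain a computable sequence of rationals $q_0, q_1, \dots$ (using no oracle) together with the fact that... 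Actually the cleanest route: a left-$\Pi^0_3$ real is a limsup of a $0'$-computable sequence of left-$\Pi^0_2$ reals, hence (by the c.e.\ density theorem of \cite{Downey.Jockusch.Schupp.ta} relativized to $0'$, together with the technique of Theorem \ref{thm:nce}) the limsup of the densities of a uniformly $0'$-c.e.\ sequence of sets. The construction then builds $P$ in blocks: on the $k$-th block (a long interval of integers), $P$ is co-$\Pi^0_1$-coded to have local density close to the $k$-th approximating value, while a sparse set of "valleys" between blocks drives the liminf down (or one simply lets the blocks oscillate so that the limsup is $r$ and nothing forces the liminf up). Keeping the whole set $\Pi^0_1$ requires that the coding be done by enumerating the complement; since the approximating reals are left-$\Pi^0_2 = \Sigma^0_2$-complement... this is where care is needed, and I expect to invoke Theorem \ref{thm:DJS}(3) relativized to $0'$ (densities of $\Pi^0_1(0')=\Pi^0_2$ sets are left-$\Sigma^0_3$) combined with a block construction to turn "density of a $\Pi^0_2$ set" into "upper density of a $\Pi^0_1$ set." The main obstacle is precisely this last reduction of the complexity of the oracle from $0'$ down to the empty oracle at the cost of passing from density to upper density, and getting the bookkeeping of the block lengths right so that the liminf does not accidentally also equal $r$ in a way that breaks nothing but must still be verified.
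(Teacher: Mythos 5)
Your implications $(1)\Rightarrow(2)$ and $(3)\Rightarrow(1)$ are fine. The counting argument for $(1)\Rightarrow(2)$ is correct: ``$\rho_n(S)>q$'' asks for a finite subset of $\{0,\dots,n-1\}$ of size $>qn$ contained in the $\Sigma^0_2$ set $S$, hence is $\Sigma^0_2$, and prefixing ``for infinitely many $n$'' gives $\Pi^0_3$; the only caveat is that ``$q<\overline{\rho}(S)$ iff infinitely many $n$ have $\rho_n(S)>q$'' can fail at $q=\overline{\rho}(S)$ when that value is rational, which is why one should dispose of rational $r$ separately (a rational real is trivially left-$\Pi^0_3$, and is easily the upper density of a computable, hence $\Pi^0_1$, set). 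This is the same reduction the paper makes by assuming $r$ irrational without loss of generality.

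The genuine gap is $(2)\Rightarrow(3)$, which is the entire content of the proposition, and your proposal does not prove it: the ``block construction'' is only an announced plan, and you yourself identify the unsolved obstacle, namely producing a $\Pi^0_1$ set (no oracle) whose local densities track target values that are only $0'$-approximable. Carrying that out is essentially reproving a limsup-version of the Downey--Jockusch--Schupp density theorems, and nothing in your sketch explains how the coding of the complement stays computable enumerable while the targets move $\Sigma^0_2$-many times. The paper avoids the construction entirely by a complementation trick: if $r$ is left-$\Pi^0_3$ then $1-r$ is left-$\Sigma^0_3$, and by Theorem 5.8 of \cite{Downey.Jockusch.Schupp.ta} (a result about \emph{lower} densities of c.e.\ sets, not covered by the summary Theorem \ref{thm:DJS}, which concerns sets that have a density) $1-r$ is the lower density of a c.e.\ set $A$; since $\overline{\rho}(\omega\setminus A)=1-\underline{\rho}(A)$, the $\Pi^0_1$ set $\omega\setminus A$ has upper density $r$. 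If you do not want to cite that external theorem, you must actually build the c.e.\ (or co-c.e.) set realizing a prescribed left-$\Sigma^0_3$ lower density, which is exactly the work your sketch defers; as it stands the implication is unproved.
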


\begin{proof} Without loss of generality, let us assume
  $r$ is irrational.

  Suppose $r$ is the upper density of a $\Sigma_2^0$ set.  Then, by
  Theorem 5.7 of \cite{Downey.Jockusch.Schupp.ta} relativized to
  $\emptyset'$, $r$ is left-$\Pi^0_3$.

At the same time, if $r$ is left-$\Pi_3^0$, then $1 - r$ is 
left-$\Sigma_3^0$.  So, by Theorem 5.8 of
\cite{Downey.Jockusch.Schupp.ta}, $1 - r$ is the lower density of a
c.e.\ set.  It follows that $r$ is the upper density of a co-c.e.\ set.
The remaining implication is immediate.  
\end{proof}

\begin{corollary}\label{cor:UPPER}
  Let $r \in [0,1]$.  Then, for all $n \geq 2$, $r$ is the upper
  density of an $n$-c.e.\ set if and only if $r$ is the upper density
  of a co-c.e.\ set.
\end{corollary}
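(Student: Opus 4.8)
The plan is to deduce the corollary from Proposition~\ref{prop:UPPER} together with two trivial closure observations, so that no new density-construction is needed at all.

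First I would handle the easy direction. Suppose $r$ is the upper density of a co-c.e.\ set $C$. Writing $C = \omega \setminus B$ with $B$ c.e.\ exhibits $C$ as a difference of two c.e.\ sets, hence as a $2$-c.e.\ set; and since the $f$-c.e.\ property is monotone in $f$ (the constant $2$ is at most any $n \geq 2$), $C$ is $n$-c.e.\ for every $n \geq 2$. Thus $r$ is the upper density of an $n$-c.e.\ set.

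For the other direction I would argue that every $n$-c.e.\ set $A$ is $\Delta^0_2$, and $\Delta^0_2 \subseteq \Sigma^0_2$, so $A$ is in particular a $\Sigma^0_2$ set. Hence if $r = \overline{\rho}(A)$, the implication $(1)\Rightarrow(2)$ of Proposition~\ref{prop:UPPER} shows that $r$ is left-$\Pi^0_3$, and then the implication $(2)\Rightarrow(3)$ of the same proposition shows that $r$ is the upper density of a $\Pi^0_1$, i.e.\ co-c.e., set. Putting the two directions together in fact shows that for each fixed $n \geq 2$ the families of upper densities of co-c.e.\ sets, of $2$-c.e.\ sets, of $n$-c.e.\ sets, and of $\Sigma^0_2$ sets all coincide, and equal the left-$\Pi^0_3$ reals in $[0,1]$ — the mechanism being the chain of set inclusions co-c.e.\ $\subseteq$ $2$-c.e.\ $\subseteq$ $n$-c.e.\ $\subseteq$ $\Sigma^0_2$ with Proposition~\ref{prop:UPPER} pinning down the two ends.

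I do not expect a genuine obstacle here: all the substantive work is already packaged in Proposition~\ref{prop:UPPER}, and the remaining ingredients (a co-c.e.\ set is $2$-c.e., and $\Delta^0_2 \subseteq \Sigma^0_2$) are immediate. The only point worth stating explicitly is precisely this last inclusion, since it is what licenses applying clause~(1) of the proposition to an $n$-c.e.\ set in the first place.
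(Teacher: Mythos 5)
Your proposal is correct and is exactly the argument the paper intends: co-c.e.\ sets are $2$-c.e.\ and hence $n$-c.e.\ for $n \geq 2$, while every $n$-c.e.\ set is $\Delta^0_2 \subseteq \Sigma^0_2$, so Proposition~\ref{prop:UPPER} gives that its upper density is left-$\Pi^0_3$ and therefore the upper density of a co-c.e.\ set. Nothing is missing, and your explicit note that the inclusion $\Delta^0_2 \subseteq \Sigma^0_2$ is what licenses applying clause~(1) is the right point to make.
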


The above results can be dualized to show that the lower densities
of the $\Pi^0_2$ sets coincide with the left-$\Sigma^0_3$ reals
in $[0,1]$ and with the lower densities of c.e.\ sets and  we thus
have a similar collapse for lower densities.   In particular,
if $A$ is any $\Delta^0_2$ set, there is a c.e.\ set with the same
lower density as $A$ and a co-c.e.\ set with the same upper density
as $A$.

\section{Summary}

We have shown that the densities of the $2$-c.e.\ sets coincide with
the reals in $[0,1]$ which are differences of left-$\Pi^0_2$ reals,
and hence there is a real which is the density of a $2$-c.e.\ set but
not of any c.e.\ or co-c.e.\ set.  We have also proved that, for $n \geq
2$ the densities of the $n$-c.e.\ sets coincide with the densities of
the $2$-c.e.\ sets.  Finally, we have shown that if $A$ is a
$\Delta_2^0$ set that has a density, then its density is the density
of an $\omega$-c.e.\ set, and in fact the density of an $f$-c.e.\ set
for each computable, nondecreasing, unbounded function $f$.   It follows
that for each such $f$ there is a real number which is the density of
an $f$-c.e.\ set but not of any $n$-c.e.\ set,  $n \in \omega$.


\bibliographystyle{amsplain}
\def\cprime{$'$}
\providecommand{\bysame}{\leavevmode\hbox to3em{\hrulefill}\thinspace}
\providecommand{\MR}{\relax\ifhmode\unskip\space\fi MR }
\providecommand{\MRhref}[2]{%
  \href{http://www.ams.org/mathscinet-getitem?mr=#1}{#2}
}
\providecommand{\href}[2]{#2}

\end{document}